\theoremstyle{thmstyleone}%
\newtheorem{theorem}{Theorem}%  meant for continuous numbers
\newtheorem{proposition}[theorem]{Proposition}%
\theoremstyle{thmstyletwo}%
\theoremstyle{thmstylethree}%
\newtheorem{definition}{Definition}%
\begin{document}

\title[2-Local and local derivations on Jordan rings]{2-Local and local derivations on Jordan matrix rings over commutative involutive rings}

%%=============================================================%%
%% GivenName	-> \fnm{Joergen W.}
%% Particle	-> \spfx{van der} -> surname prefix
%% FamilyName	-> \sur{Ploeg}
%% Suffix	-> \sfx{IV}
%% \author*[1,2]{\fnm{Joergen W.} \spfx{van der} \sur{Ploeg}
%%  \sfx{IV}}\email{iauthor@gmail.com}
%%=============================================================%%

\author[1]{\fnm{Shavkat} \sur{Ayupov}}\email{shavkat.ayupov@mathinst.uz}
\equalcont{These authors contributed equally to this work.}

\author*[1,2]{\fnm{Farkhodzhon} \sur{Arzikulov}}\email{arzikulovfn@gmail.com}
\equalcont{These authors contributed equally to this work.}

\author[2]{\fnm{Nodirbek} \sur{Umrzaqov}}\email{umrzaqov2010@mail.ru}
\equalcont{These authors contributed equally to this work.}

\author[2]{\fnm{Olimjon} \sur{Nuriddinov}}\email{o.nuriddinov86@mail.ru}
\equalcont{These authors contributed equally to this work.}

\affil*[1]{\orgdiv{V.I. Romanovskiy Institute of Mathematics}, \orgname{Uzbekistan Academy of Sciences}, \orgaddress{\street{Olmazor district, University street 9}, \city{Tashkent}, \postcode{100174}, \country{Uzbekistan}}}

\affil[2]{\orgdiv{Mathematics}, \orgname{Andijan State University}, \orgaddress{\street{University 129}, \city{Andijan}, \postcode{170100}, \country{Uzbekistan}}}

%\affil[3]{\orgdiv{Mathematics}, \orgname{Namangan State University}, \orgaddress{\street{Boburshoh street 161}, \city{Namangan}, \postcode{160107}, %\country{Uzbekistan}}}

%\affil[4]{\orgdiv{Mathematics}, \orgname{Andijan State University}, \orgaddress{\street{University 129}, \city{Andijan}, \postcode{170100}, %\country{Uzbekistan}}}

%%==================================%%
%% Sample for unstructured abstract %%
%%==================================%%

\abstract{In the present paper we prove that every 2-local inner
derivation on the Jordan ring of self-adjoint matrices over an involutive commutative associative ring
is a derivation. We also apply our technique
to various Jordan algebras of self-adjoint operator-valued maps on a set and prove
that every 2-local spatial derivation on such algebras is a spatial derivation.
It is also proved that every local spatial derivation on the same Jordan algebras is a derivation.}

\keywords{Derivation; inner Jordan derivation; 2-local Jordan derivation; local Jordan derivation; Jordan ring; Jordan
ring of matrices over an involutive associative ring}

%%\pacs[JEL Classification]{D8, H51}

\pacs[MSC Classification]{16W25, 46L57, 47B47, 17C65}

\maketitle

\section{Introduction}

The present paper is devoted to 2-local and local derivations on Jordan rings and Jordan algebras.
The study of 2-local derivations began in the paper \cite{S} of \v{S}emrl. In \cite{S} the notion of 2-local derivations
is introduced and 2-local derivations on the algebra $B(H)$ of all bounded linear operators on an infinite-dimensional
separable Hilbert space $H$ are described. After these first results, a number of paper were devoted to 2-local maps on different types of rings,
algebras, Banach algebras and Banach spaces.

Let $\mathcal{A}$ be a unital involutive associative ring and $\mathcal{A}_ {sa}$ be the set of all self-adjoint elements of
the ring $\mathcal{A}$. Suppose $2$ is invertible in $\mathcal{A}$. Then, it is known that $(\mathcal{A} _{sa}, \cdot)$ is a Jordan ring
with respect to the Jordan multiplication $a\cdot b=\frac{1}{2}(ab+ba)$,
and, every inner derivation of the Jordan ring $(\mathcal{A}_{sa}, \cdot)$ is extended to
an inner derivation of the involutive associative ring $\mathcal{A}$. An extension of a derivation on
a special Jordan algebra to a derivation on the enveloping associative algebra of this Jordan algebra is considered in \cite{U}.
As to 2-local inner derivations on $(\mathcal{A}_{sa}, \cdot)$, in this case
till now, it was not possible to carry out an extension of a 2-local inner derivation on $(\mathcal{A}_{sa}, \cdot)$ to
a 2-local derivation on the associative ring $\mathcal{A}$ without additional conditions.
This problem shows that the problem of the description of 2-local derivations on Jordan algebras requires development of
methods for solving this problem within the framework of the theory of Jordan algebras.

In this paper 2-local derivations on the Jordan ring of self-adjoint matrices over an involutive commutative associative ring are described.
If in an associative algebra $\mathcal{A}$ we take the Jordan multiplication $a\cdot b=\frac{1}{2}(ab+ba)$, then we obtain
the Jordan algebra $(\mathcal{A},\cdot)$. In this case, every 2-local inner derivation of the algebra $\mathcal{A}$ is
a derivation if and only if every 2-local inner derivation of the Jordan algebra $(\mathcal{A},\cdot)$ is a derivation.
In general, for any special Jordan algebra $(\mathcal{J},\cdot)$, the Jordan multiplication $\cdot$ is generated by an
associative multiplication. Using this fact 2-local inner derivations on the Jordan
ring $H_n(\Re)$ of symmetric $n\times n$ matrices over a commutative associative ring $\Re$ are investigated in \cite{AA3}.
As a result, it was established that every 2-local inner derivation on the Jordan
ring $H_n(\Re)$ is a derivation. In particular, every 2-local inner derivation on the Jordan
rings $H_n({\mathbb{R}})$ and $S_n(\mathbb{C})$ of symmetric $n\times n$ matrices over the fields $\mathbb{R}$ of real numbers
and $\mathbb{C}$ of complex numbers respectively is a derivation.
Note that despite the connections $H_n(\mathbb{C})=H_n(\mathbb{R})+iSK_n(\mathbb{R})$ and
$S_n(\mathbb{C})=H_n(\mathbb{R})+iH_n(\mathbb{R})$,
where $SK_n(\mathbb{R})$ is the Lie algebra of skew-symmetric matrices on $\mathbb{R}$,
for the Jordan ring $H_n(\mathbb{C})$ of self-adjoint $n\times n$ matrices over the field $\mathbb{C}$ of complex numbers
the problem under consideration remained open.

Also, the method developed for the Jordan ring $H_n(\Re)$ in \cite{AA3} could not be applied to $H_n(\mathbb{C})$.
Indeed, if the involution of a unital involutive commutative associative ring $\Re$ is trivial, then
$$
\sum_{k=1}^m [a_k,b_k]^{i,i}=0, i=1,2,\dots,n,
$$
where $D=\sum_{k=1}^mD_{a_k,b_k}$ is an inner derivation on $H_n(\Re)$,
generated by $a_1$, $a_2$, $\dots,$ $a_m$, $b_1$, $b_2$, $\dots,$ $b_m\in H_n(\Re)$. In the general case
of unital involutive commutative associative rings this statement does not hold.
Also, first two equalities of \cite[Proposition 13]{AA3} and \cite[Proposition 14]{AA3} are also do not hold.
So, the method of proving of Theorem \ref{3.11} described in \cite{AA3} can not be applied to the case
of unital involutive commutative associative rings.

In the present paper the investigation of 2-local derivations on Jordan
algebras is also based on the properties of the associative multiplication.
We consider a similar problem in the more general
setting of Jordan rings.
Namely, we study inner derivations and 2-local inner
derivations on Jordan rings of self-adjoint matrices over an involutive commutative associative ring.
We prove that each 2-local inner derivation on the Jordan
ring $H_n(\Re)$ of self-adjoint $n\times n$ matrices over a unital involutive commutative associative ring $\Re$ is
a derivation.
As a corollary we establish that every 2-local inner derivation on the Jordan
algebra $H_n(\mathcal{A})$ of self-adjoint $n\times n$ matrices over a unital involutive commutative associative algebra $\mathcal{A}$
is a derivation. In one side, the method
employed is a generalization of the one in
\cite{AA3}, used in proving the corresponding theorem for the Jordan
ring $H_n(\Re)$ of symmetric $n\times n$ matrices over a unital commutative associative ring $\Re$.
In other side, the method employed is a Jordan modification of the one in \cite{AA3},
used in proving the corresponding theorem for the associative
ring $M_n(\Re)$ of $n\times n$ matrices over a unital commutative associative ring $\Re$. The general ideas of
these methods are the same but they distinct in details.

Indeed,

1) although the first two statements of Proposition \ref{3.31} and Proposition 1 in \cite{AA3} are the same, their proofs differ.

2) instead of $x_o=\sum^{n-1}_{k=1} e_{k,k+1}$ from \cite{AA3} we use $x_o=\sum_{k=1}^{n-1}\bar{e}_{k,k+1}\in H_n(\Re)$.

3) instead of $e_{i,j}$ from \cite{AA3} we use $\bar{e}_{i,j}=e_{i,j}+e_{j,i}$. So, the proofs concerning them differ in details.

4) Proposition \ref{3.41} is new, i.e., its analog does not exist in \cite{AA3}.

5) Proposition \ref{3.41} is used in proof of Proposition \ref{3.5}. So, although Proposition \ref{3.5} and proposition 2 in \cite{AA3} are the same, their proofs differ.

6) Definitions of $a_{i,j}$ in \cite{AA3} and  in Section \ref{S2} are differ. More precisely, $a_{i,j}$ in \cite{AA3} additionally defined
by the element $x_o=\sum^{n-1}_{k=1} e_{k,k+1}$.

We also study 2-local spatial derivations on various Jordan algebras of
self-adjoint operator-valued maps on a set.
We prove that every 2-local spatial derivation on subalgebras of the algebra
$M(\Omega,B(H)_{sa})$ of all maps from $\Omega$ to $B(H)_{sa}$ for an arbitrary set $\Omega$ and the algebra $B(H)$ of
all bounded linear operators on a separable Hilbert space $H$, under some conditions, is a derivation.
The problems considered here are firstly mentioned in \cite{AA4} (Problem 1).

The remaining section are devoted to the description of local inner derivations on the considered algebras.
It is proved that every local spatial derivation on various Jordan algebras of
self-adjoint operator-valued maps on a set is a derivation.
A number of results concerning local derivations on Jordan algebras are presented in
\cite{AKP}.

\section{Preliminaries}

This section is devoted to derivations and 2-local derivations of Jordan rings.

Let $\mathcal{J}$ be a Jordan ring. Recall that a map $D : \mathcal{J}\to \mathcal{J}$ is called
a derivation, if $D(x+y)=D(x)+D(y)$ and $D(xy)=D(x)y+xD(y)$ for
any two elements $x$, $y\in \mathcal{J}$.

Given subsets $B$ and $C$ of a Lie algebra with bracket $[\cdot, \cdot]$, let $[B,C]$ denote
the set of all finite sums of elements $[b,c]$, where $b\in B$ and $c\in C$.

Consider a Jordan ring $\mathcal{J}$ and let $m=\{xM: x\in \mathcal{J}\}$, where $xM$  denotes the multiplication
operator defined by $(xM)y:=x\cdot y$ for all $x$, $y\in \mathcal{J}$.
Let $aut(\mathcal{J})$ denotes the Lie ring of all derivations of $\mathcal{J}$.
The elements of the ideal $int(\mathcal{J}):=[m,m]$
of $aut(\mathcal{J})$ are called inner derivations of $\mathcal{J}$.
In other words, a derivation $D$ on $\mathcal{J}$ is called an inner derivation if there exist
elements  $a_1$, $a_2$, $\dots,$ $a_m$, $b_1$, $b_2$, $\dots,$ $b_m$ in  $\mathcal{J}$ such that
$$
D(x)=\sum_{k=1}^m [a_k(b_kx)-b_k(a_kx)], x\in \mathcal{J}.
$$
First the notion of an inner derivation introduced in \cite{NJ}.

A map $\Delta : \mathcal{J}\to \mathcal{J}$ is called a 2-local derivation, if for
any two elements $x$, $y\in \mathcal{J}$ there exists a derivation
$D_{x,y}:\mathcal{J}\to \mathcal{J}$ such that $\Delta (x)=D_{x,y}(x)$,
$\Delta(y)=D_{x,y}(y)$.

Let $\Delta$ be a 2-local derivation of the Jordan ring $\mathcal{J}$.
$\Delta$ is called a 2-local inner derivation, if for each pair of elements
$x$, $y\in \mathcal{J}$ there is an inner derivation $D_{x,y}$ of $\mathcal{J}$ such that $\Delta(x)=D_{x,y}(x)$, $\Delta(y)=D_{x,y}(y)$.

Let $\mathcal{A}$ be a unital associative ring where $2$ is invertible.
Then the set $\mathcal{A}$ with respect to the operations of
addition and Jordan multiplication
$$
a\cdot b=\frac{1}{2}(ab+ba), a, b\in \mathcal{A}
$$
is a Jordan ring. This Jordan ring will be denote by $(\mathcal{A}, \cdot)$. For any
elements $a_1$, $a_2$, $\dots,$ $a_m$, $b_1$, $b_2$, $\dots,$ $b_m\in \mathcal{A}$ the map
$$
D(x)=\sum_{k=1}^m D_{a_k,b_k}(x)=\sum_{k=1}^m(a_k\cdot (b_k\cdot x)-b_k\cdot (a_k\cdot x))
$$
$$
=D_{\frac{1}{4}(\sum_{k=1}^l [a_k,b_k])}(x), x\in \mathcal{A}
$$
is a derivation on the unital associative ring $\mathcal{A}$.
Therefore every inner derivation of the Jordan ring $(\mathcal{A}, \cdot)$ is an inner derivation of the
associative ring $\mathcal{A}$. And also every inner derivation of the form $D_{ab-ba}(x)=(ab-ba)x-x(ab-ba)$,
$x\in \mathcal{A}$ is an inner derivation of the Jordan ring $(\mathcal{A}, \cdot)$.
Also, every 2-local inner derivation of the Jordan ring $(\mathcal{A}, \cdot)$
is a 2-local inner derivation of the associative ring $\mathcal{A}$.

Let $\Re$ be a unital involutive commutative associative ring, $M_n(\Re)$ be the
$n\times n$ matrices ring over $\Re$, $n>1$.
Let $\{e_{i,j}\}_{i,j=1}^n$ be the set of matrix units in
$M_n(\Re)$, i.e. $e_{i,j}$ is a matrix with components
$a^{i,j}={\bf 1}$ and $a^{k,l}={\bf 0}$ if $(k,l)\neq (i,j)$, where
${\bf 1}$ is the identity element, ${\bf 0}$ is the zero element of
$\Re$, and a matrix $a\in M_n(\Re)$ is written as $a=\sum_{k,l=1}^n
a^{k,l}e_{k,l}$, where $a^{k,l}\in \Re$ for $k,l=1,2,\dots, n$.

Suppose that $2$ is invertible in $\Re$. In this case the set
$$
H_n(\Re)=\{(a^{i,j})_{i,j=1}^n\in M_n(\Re):
$$
$$
(a^{i,i})^*=a^{i,i},
(a^{i,j})^*=a^{j,i}, i\neq j,
i,j=1,2,\dots ,n\},
$$
i.e. the set of all self-adjoint $n\times n$ matrices over $\Re$,
is a Jordan ring with respect to the addition and the Jordan multiplication
$$
a\cdot b=\frac{1}{2}(ab+ba), a, b\in H_n(\Re).
$$
We denote this Jordan ring by $H_n(\Re)$.
Throughout this paper, let $\bar{e}_{i,j}=e_{i,j}+e_{j,i}$ for every pair of distinct indices $i$, $j$
in $\{1,2,\dots ,n\}$.

The following theorem takes place.

\begin{theorem} \cite[Theorem 15]{AA3} \label{3.111}
Let $\Re$ be a unital involutive commutative associative ring, and, let $H_n(\Re)$ be the Jordan ring of self-adjoint $n\times n$ matrices over $\Re$.
Then, if the involution is trivial, i.e., an identity map, then every 2-local inner derivation on $H_n(\Re)$ is a derivation.
\end{theorem}

\section{2-Local derivations on the Jordan ring of self-adjoint two dimensional matrices over an involutive commutative ring}

Here we prove the main theorem separately in the case of the Jordan ring of self-adjoint $2\times 2$ matrices over an involutive commutative ring.
In this case the theorem has an additional statement.

Let, throughout the rest sections, $\Re$ be a unital involutive commutative associative ring, and, $H_n(\Re)$ be the Jordan algebra of all self-adjoint $n\times n$ matrices over $\Re$. Let $D=\sum_{k=1}^m D_{a_k,b_k}$, be the inner derivations on $H_n(\Re)$,
generated by elements $a_1$, $a_2$, $\dots,$ $a_m$, $b_1$, $b_2$, $\dots,$ $b_m\in H_n(\Re)$.
Then, throughout the rest sections, the element $\frac{1}{4}(\sum_{k=1}^l [a_k,b_k])$, where $[a,b]=ab-ba$, we denote by $a_D$. Thus,
$a_D=\frac{1}{4}(\sum_{k=1}^l [a_k,b_k])$ and $D(x)=a_Dx-xa_D$, $x\in H_n(\Re)$.

\begin{proposition}  \label{3.1}
Let $D_1$, $D_2$ be inner derivations on $H_2(\Re)$ such that
$$
D_1(\bar{e}_{1,2})=D_2(\bar{e}_{1,2}).
$$
Then
$$
a_{D_1}^{1,2}=a_{D_2}^{1,2}, \,\,\,
a_{D_1}^{2,1}=a_{D_2}^{2,1}, \,\,\,
a_{D_1}^{1,1}-a_{D_1}^{2,2}=a_{D_2}^{1,1}-a_{D_2}^{2,2}
$$
with respect to the associative multiplication.
\end{proposition}

\begin{proof}
By the definition of a derivation and by the properties of Peirce component
$$
D_1(\bar{e}_{1,2})=D_1(2e_{11}\cdot \bar{e}_{1,2})=2(D_1(e_{11}))\cdot \bar{e}_{1,2}+2e_{11}\cdot D_1(\bar{e}_{1,2}),
$$
$$
D_2(\bar{e}_{1,2})=D_2(2e_{11}\cdot \bar{e}_{1,2})=2(D_2(e_{11}))\cdot \bar{e}_{1,2}+2e_{11}\cdot D_2(\bar{e}_{1,2}).
$$
Hence, since $D_1(\bar{e}_{1,2})=D_2(\bar{e}_{1,2})$ we have
$$
D_1(e_{11})\cdot \bar{e}_{1,2}=D_2(e_{11})\cdot \bar{e}_{1,2}.\,\,\,\,\,\,\,\,\,\,\,\,\,(1)
$$

Then, by (1), we have
\[
\bar{e}_{1,2}a_{D_1}e_{1,1}-e_{2,1}a_{D_1}+a_{D_1}e_{1,2}-e_{1,1}a_{D_1}\bar{e}_{1,2}=
\bar{e}_{1,2}a_{D_2}e_{1,1}-e_{2,1}a_{D_2}+a_{D_2}e_{1,2}-e_{1,1}a_{D_2}\bar{e}_{1,2}.
\]
Multiplying the last equality by $e_{1,1}$ on the left side yields
\[
e_{1,2}a_{D_1}e_{1,1}+e_{1,1}a_{D_1}e_{1,2}-e_{1,1}a_{D_1}\bar{e}_{1,2}=
e_{1,2}a_{D_2}e_{1,1}+e_{1,1}a_{D_2}e_{1,2}-e_{1,1}a_{D_2}\bar{e}_{1,2},
\]
and, multiplying the last equality by $e_{1,1}$ on the right side yields
\[
e_{1,2}a_{D_1}e_{1,1}-e_{1,1}a_{D_1}e_{2,1}=e_{1,2}a_{D_2}e_{1,1}-e_{1,1}a_{D_2}e_{2,1}.
\]
Multiplying this equality by $e_{1,2}$ on the right side we get
\[
e_{1,1}a_{D_1}e_{2,2}=e_{1,1}a_{D_2}e_{2,2}.\,\,\,\,\,\,\,\,\,\,\, (2)
\]
Similarly,
\[
e_{2,2}a_{D_1}e_{1,1}=e_{2,2}a_{D_2}e_{1,1}. \,\,\,\,\,\,\,\,\,\,\, (3)
\]
Then
\[
D_1(e_{1,1})=a_{D_1}e_{1,1}-e_{1,1}a_{D_1}=e_{2,2}a_{D_1}e_{1,1}+e_{1,1}a_{D_1}e_{1,1}-e_{1,1}a_{D_1}e_{1,1}+e_{1,1}a_{D_1}e_{2,2}
\]
\[
=e_{2,2}a_{D_2}e_{1,1}+e_{1,1}a_{D_2}e_{1,1}-e_{1,1}a_{D_2}e_{1,1}+e_{1,1}a_{D_2}e_{2,2}=a_{D_2}e_{1,1}-e_{1,1}a_{D_2}=D_2(e_{1,1})
\]
by (2) and (3), i.e.,
$$
D_1(e_{11})=D_2(e_{11}).\,\,\,\,\,\,\,\,\,\,\,\,\,(4)
$$

Take the extensions $D_{a_{D_1}}$, $D_{a_{D_2}}$ on $M_2(\Re)$ of the derivations $D_1$, $D_2$ respectively. Then
$$
D_{a_{D_1}}(e_{12})=D_{a_{D_1}}(e_{11}\bar{e}_{1,2})=D_{a_{D_1}}(e_{11})\bar{e}_{1,2}
+e_{11}D_{a_{D_1}}(\bar{e}_{1,2}),
$$
$$
D_{a_{D_2}}(e_{12})=D_{a_{D_2}}(e_{11}\bar{e}_{1,2})=D_{a_{D_2}}(e_{11})\bar{e}_{1,2}
+e_{11}D_{a_{D_2}}(\bar{e}_{1,2}).
$$
By (4) $D_{a_{D_1}}(e_{11})=D_{a_{D_2}}(e_{11})$. Therefore
$$
D_{a_{D_1}}(e_{12})=D_{a_{D_2}}(e_{12}).
$$
Hence,
$$
e_{11}a_{D_1}e_{12}e_{22}-e_{11}e_{12}a_{D_1}e_{22}=e_{11}a_{D_2}e_{12}e_{22}-e_{11}e_{12}a_{D_2}e_{22}
$$
and
$$
a_{D_1}^{11}e_{12}-a_{D_1}^{22}e_{12}=a_{D_2}^{11}e_{12}-a_{D_2}^{22}e_{12},
$$
i.e. $a_{D_1}^{1,2}=a_{D_2}^{1,2}$, $a_{D_1}^{2,1}=a_{D_2}^{2,1}$ and
$a_{D_1}^{11}-a_{D_1}^{22}=a_{D_2}^{11}-a_{D_2}^{22}$.
The proof is complete.
\end{proof}

The following theorem takes place.

\begin{theorem}  \label{3.3}
Let $\Re$ be a unital involutive commutative associative ring. Then
every inner 2-local derivation on $H_2(\Re)$ is an inner derivation.
\end{theorem}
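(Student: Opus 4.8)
The plan is to produce a single inner derivation directly from the value $\Delta(\bar{e}_{1,2})$ and then to check, using Lemma \ref{3.1} together with the commutativity of $\Re$, that it agrees with $\Delta$ at every point. Recall from the preliminaries that each inner derivation of $H_2(\Re)$ has the form $D_w\colon x\mapsto wx-xw$ for the (skew-adjoint) matrix $w=\frac14\sum_k[a_k,b_k]\in M_2(\Re)$, the products being taken in the associative ring $M_2(\Re)$. The crucial structural remark is that, because $\Re$ is commutative, the center of $M_2(\Re)$ is exactly $\{\gamma I:\gamma\in\Re\}$; hence $D_w=D_{w'}$ as soon as $w-w'=\gamma I$ for some $\gamma\in\Re$, and equivalently $D_w$ is completely determined by the three pieces of data $e_{11}we_{22}$, $e_{22}we_{11}$ and $w^{11}-w^{22}$. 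These are precisely the quantities that Lemma \ref{3.1} controls.

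First I would fix, using the 2-local inner property applied to the pair $(\bar{e}_{1,2},\bar{e}_{1,2})$, an inner derivation $\sum_{k=1}^m D_{a_k,b_k}$ with $\Delta(\bar{e}_{1,2})=\sum_{k=1}^m D_{a_k,b_k}(\bar{e}_{1,2})$, and set $a=\frac14\sum_{k=1}^m[a_k,b_k]$. This inner derivation, namely $D_a$ restricted to $H_2(\Re)$, is my candidate; it is inner by construction, so it only remains to prove that $\Delta=D_a$ on all of $H_2(\Re)$.

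Now I would take an arbitrary $x\in H_2(\Re)$ and apply the 2-local inner property to the pair $(\bar{e}_{1,2},x)$, obtaining an inner derivation $\sum_{k=1}^l D_{c_k,d_k}$ with $\Delta(\bar{e}_{1,2})=\sum_{k=1}^l D_{c_k,d_k}(\bar{e}_{1,2})$ and $\Delta(x)=\sum_{k=1}^l D_{c_k,d_k}(x)$; put $c=\frac14\sum_{k=1}^l[c_k,d_k]$. Since both inner derivations agree with $\Delta$ at $\bar{e}_{1,2}$, Lemma \ref{3.1} and the displayed consequences following it give $e_{11}ae_{22}=e_{11}ce_{22}$, $e_{22}ae_{11}=e_{22}ce_{11}$ and $a^{11}-a^{22}=c^{11}-c^{22}$. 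Thus $a-c$ is diagonal with equal diagonal entries, that is $a-c=\gamma I$ for some $\gamma\in\Re$, which is central; therefore $D_a=D_c$ on $M_2(\Re)$, and in particular $\Delta(x)=D_c(x)=D_a(x)$. As $x$ was arbitrary, $\Delta=D_a$ is an inner derivation.

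The substantive work is carried by Lemma \ref{3.1}, which guarantees that the three off-diagonal and diagonal-difference invariants read off from $\Delta(\bar{e}_{1,2})$ are independent of the particular inner derivation realizing $\Delta$ there. The only additional point, and the place where the commutativity hypothesis on $\Re$ is genuinely used, is the identification of the residual ambiguity $a-c$ as a central scalar matrix $\gamma I$: over a commutative $\Re$ the center of $M_2(\Re)$ is precisely $\Re I$, so matching these three invariants forces the two inner derivations to coincide on all of $H_2(\Re)$. I expect no real obstacle beyond this, since the single element $\bar{e}_{1,2}$ already pins down every datum on which an inner derivation of $H_2(\Re)$ can depend.
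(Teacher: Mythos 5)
Your proposal is correct, and it reaches the conclusion by a cleaner route than the paper's own proof, although both share the same skeleton: fix $a=\frac{1}{4}\sum_{k=1}^m[a_k,b_k]$ from a realization of $\Delta$ at $\bar{e}_{1,2}$, and for arbitrary $x$ take a joint realization $c=\frac{1}{4}\sum_{k=1}^l[c_k,d_k]$ at the pair $(\bar{e}_{1,2},x)$, so that Lemma \ref{3.1} pins down $e_{11}ce_{22}$, $e_{22}ce_{11}$ and $c^{11}-c^{22}$. Where you differ is the finishing step. The paper never formulates your key observation that $a-c=\gamma I$ is a central scalar matrix; instead it verifies, corner by corner, that each Peirce component $e_{ii}\Delta(x)e_{jj}$ equals $e_{ii}\bigl((\sum_{k,l}a_{k,l})x-x(\sum_{k,l}a_{k,l})\bigr)e_{jj}$ by expanding $e_{ii}(cx-xc)e_{jj}$ and using commutativity of $\Re$ to slide the diagonal entries $c^{11},c^{22}$ past $x$ and replace their difference by $a^{11}-a^{22}$. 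Your argument packages the same three invariants into the single statement that the implementing matrix is determined up to $\gamma I$, which both shortens the proof and isolates exactly where the commutativity hypothesis enters (centrality of scalar matrices in $M_2(\Re)$); your identities $a-c$ diagonal with $a^{11}-c^{11}=a^{22}-c^{22}$, hence $a-c=\gamma I$ and $D_a=D_c$, are all verified correctly. One thing the paper's more pedestrian computation buys is that it transfers almost verbatim to the $n\times n$ case (Theorem \ref{3.11}): there a single $\bar{e}_{i,j}$ no longer determines the implementing matrix up to the center, so the central-scalar shortcut does not extend directly, and the paper must glue the data from all pairs $(i,j)$ using the auxiliary element $x_o$ and Lemma \ref{3.6}; the Peirce-corner bookkeeping in Theorem \ref{3.3} is a rehearsal for that. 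For the $2\times 2$ statement as posed, your proof is complete and preferable.
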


\begin{proof}
Let $\Delta$ be an arbitrary 2-local inner derivation on $H_2(\Re)$.
We shall prove the existence of an inner derivation $D$ satisfying
$$
\Delta (x)=D_{a_D}(x)
$$
for all $x\in H_2(\Re)$.

Let $D_o$ be an inner derivation on $H_2(\Re)$ such that
$$
\Delta (\bar{e}_{1,2})=D(\bar{e}_{1,2}).
$$

Let
\[
a_{i,j}=e_{i,i}a_{D_o}e_{j,j}, a^{i,j}e_{i,j}=a_{D_o}^{i,j}e_{i,j},
\]
\[
a^{i,j}\in \Re, a_{D_o}^{i,j}\in \Re,  i,j=1,2.
\]
Then, by Proposition \ref{3.1}, for any inner derivation $D$ such that
$$
\Delta (\bar{e}_{1,2})=D(\bar{e}_{1,2}),
$$
we have
$$
a_{12}=e_{11}a_De_{22}, a_{21}=e_{22}a_De_{11},
$$
$$
a^{11}-a^{22}=a_D^{11}-a_D^{22}. \eqno{(5)}
$$
It should be noted that the elements $a_{1,2}$, $a_{2,1}\in H_2(\Re)$
and $a_{1,1}-a_{2,2}\in\Re$ do not depend on the inner derivations $D_o$ and $D$ chosen for the 2-local inner derivation $\Delta$.

Let $x\in H_2(\Re)$ and $D$ be an inner derivation on $H_2(\Re)$ such that
$$
\Delta (\bar{e}_{1,2})=D(\bar{e}_{1,2}), \Delta (x)=D(x).
$$
Then, by (5),
$$
e_{22}\Delta (x)e_{11}=e_{22}D(x)e_{11}=e_{22}D_{a_D}(x)e_{11}=e_{22}(a_Dx-xa_D)e_{11}
$$
$$
=e_{22}a_De_{22}xe_{11}+e_{22}a_De_{11}xe_{11}-e_{22}xe_{22}a_De_{11}-e_{22}xe_{11}a_De_{11}
$$
$$
=a_{21}e_{11}xe_{11}-e_{22}xe_{22}a_{21}+e_{22}a_De_{22}xe_{11}-e_{22}xe_{11}a_De_{11}
$$
$$
=a_{21}e_{11}xe_{11}-e_{22}xe_{22}a_{21}+(a^{22}-a^{11})e_{22}xe_{11}
$$
$$
=a_{21}e_{11}xe_{11}-e_{22}xe_{22}a_{21}+a_{22}e_{22}xe_{11}-e_{22}xe_{11}a_{11}
$$
$$
=e_{22}(a_{21}+a_{22}+a_{11}+a_{12})xe_{11}-e_{22}x(a_{21}+a_{11}+a_{11}+a_{12})e_{22}
$$
$$
=e_{22}((\sum_{i,j=1}^2 a_{i,j})x-x(\sum_{i,j=1}^2 a_{i,j}))e_{11}.
$$

Similarly
$$
e_{11}\Delta (x)e_{22}=e_{11}((\sum_{i,j=1}^2 a_{i,j})x-x(\sum_{i,j=1}^2 a_{i,j}))e_{22}.
$$
Also
$$
e_{11}\Delta (x)e_{11}=e_{11}D(x)e_{11}=e_{11}D_{a_D}(x)e_{11}=e_{11}(a_Dx-xa_D)e_{11}
$$
$$
=e_{11}a_De_{22}xe_{11}+e_{11}a_De_{11}xe_{11}-e_{11}xe_{22}a_De_{11}-e_{11}xe_{11}a_De_{11}
$$
$$
=a_{12}e_{22}xe_{11}+a_{11}e_{11}xe_{11}-e_{11}xe_{22}a_{21}-e_{11}xe_{11}a_{11}
$$
$$
=e_{11}(a_{12}+a_{11}+a_{22}+a_{21})xe_{11}+e_{11}x(a_{21}+a_{11}+a_{22}+a_{12})e_{11}
$$
$$
=e_{11}((\sum_{i,j=1}^2 a_{i,j})x-x(\sum_{i,j=1}^2 a_{i,j}))e_{11}.
$$
Similarly we have
$$
e_{22}\Delta (x)e_{22}=e_{22}((\sum_{i,j=1}^2 a_{i,j})x-x(\sum_{i,j=1}^2 a_{i,j}))e_{22}.
$$
Thus,
$$
\Delta (x)=\sum_{i,j=1}^2 e_{i,i}\Delta (x)e_{j,j}=\sum_{i,j=1}^2 e_{i,i}((\sum_{i,j=1}^2 a_{i,j})x-x(\sum_{i,j=1}^2 a_{i,j}))e_{j,j}
$$
$$
=(\sum_{i,j=1}^2 a_{i,j})x-x(\sum_{i,j=1}^2 a_{i,j})=D_o(x),
$$
where $a_{D_o}=\sum_{i,j=1}^2 a_{i,j}$.
So, $\Delta(x)=D_o(x)$ for all $x\in H_2(\Re)$. Hence $\Delta$ is an inner derivation. The proof
is complete.
\end{proof}

\section{2-Local derivations on the Jordan ring of self-adjoint matrices over an involutive commutative ring}  \label{S2}

In this section our main goal is to prove the following theorem.

\begin{theorem} \label{3.11}
Let $\Re$ be a unital involutive commutative associative ring, and, let $H_n(\Re)$ be the Jordan ring of self-adjoint $n\times n$ matrices over $\Re$.
Then every 2-local inner derivation on $H_n(\Re)$ is a derivation.
\end{theorem}

The idea of the proof is, for each 2-local inner derivation $\Delta$ on $H_n(\Re)$, we find a skew-adjoint element $\bar{a}$ in $M_n(\Re)$, depending on $\Delta$,
such that $\Delta(x)=\bar{a}x-x\bar{a}$ for any $x\in H_n(\Re)$ with respect to the multiplication of matrices.

The proof consists of three parts. The first part consists of constructing the element $\bar{a}$ (Proposition \ref{3.31}).
The second part consists of establishing properties of $\bar{a}$ needed for proving Theorem \ref{3.11} (Propositions \ref{3.6}, \ref{3.41}, \ref{3.5}, \ref{3.7} and \ref{3.8}).
The third part of the proof is a proof of the equality
\[
\Delta(x)=\bar{a}x-x\bar{a}, x\in H_n(\Re)
\]
(Proof of Theorem \ref{3.11}).

For this propose we need the following propositions.

\begin{proposition}  \label{3.31}
Let $i$, $j$ be arbitrary distinct indices in $\{1,2,\dots,n\}$, and,
let $D_1$, $D_2$ be the inner derivations on $H_n(\Re)$ such that
$$
D_1(\bar{e}_{i,j})=D_2(\bar{e}_{i,j}).
$$
Then the following equalities are valid relatively the associative multiplication
$$
a_{D_1}^{i,j}=a_{D_2}^{i,j}, \,\,\, a_{D_1}^{j,i}=a_{D_2}^{j,i}, \,\,\,
a_{D_1}^{i,i}-a_{D_1}^{j,j}=a_{D_2}^{i,i}-a_{D_2}^{j,j}.
$$
\end{proposition}

\begin{proof}
The assertion of this proposition is proved similar to the proof of Proposition \ref{3.1}.
\end{proof}

Let $\Delta$ be a 2-local derivation on $H_n(\Re)$, $i$, $j$ be arbitrary distinct indices in $\{1,2,\dots,n\}$, and,
let $D^{i,j}$ be the inner derivation on $H_n(\Re)$ such that
$$
\Delta (\bar{e}_{i,j})=D^{i,j}(\bar{e}_{i,j}).
$$
By Proposition \ref{3.31} the following elements are well-defined
$$
a_{i,j}=e_{i,i}a_{D^{i,j}}e_{j,j}, \,\,\,
a^{i,j}e_{i,j}=a_{D^{i,j}}^{i,j}e_{i,j}, \,\,\, a^{i,j}\in \Re, \,\,\,
a_{D^{i,j}}^{i,j}\in \Re,
$$
$$
a_{j,i}=e_{j,j}a_{D^{i,j}}e_{i,i}, \,\,\,
a^{j,i}e_{j,i}=a_{D^{i,j}}^{j,i}e_{j,i}, \,\,\,
a^{j,i}\in \Re, \,\,\, a_{D^{i,j}}^{j,i}\in \Re.
$$

Now let $x_o=\sum_{k=1}^{n-1}\bar{e}_{k,k+1}\in H_n(\Re)$.  Fix distinct indices $i_o$, $j_o$. Let
$D_o$ be an inner derivation on $H_n(\Re)$ such that
$$
\Delta(\bar{e}_{i_o,j_o})=D_o(\bar{e}_{i_o,j_o}), \,\,\, \Delta(x_o)=D_o(x_o).
$$

Put $a_{D_o}=\sum_{i,j=1}^n a_{D_o}^{i,j}e_{i,j}\in H_n(\Re)$, $a_{i,i}=a^{i,i}e_{i,i}=a_{D_o}^{i,i}e_{i,i}$, $i=1,2,\dots,n$ and
$\bar{a}=\sum_{i,j=1, i\neq j}^n a_{i,j}+\sum_{i=1}^n a_{i,i}$.
In these notations the following proposition is valid.

\begin{proposition}  \label{3.6}
Let $k$, $l$ be arbitrary distinct
indices, and let $D$ be an inner derivation on $H_n(\Re)$ such that
$$
\Delta(\bar{e}_{k,l})=D(\bar{e}_{k,l}), \,\,\,
\Delta(x_o)=D(x_o).
$$
Then $a^{k,k}-a^{l,l}=a_D^{k,k}-a_D^{l,l}$.
\end{proposition}

\begin{proof}
We may assume that  $k<l$. We have
$$
\Delta(x_o)=a_{D_o}x_o-x_oa_{D_o}=a_Dx_o-x_oa_D.
$$
Hence
$$
e_{k,k}(a_{D_o}x_o-x_oa_{D_o})e_{k+1,k+1}=e_{k,k}(a_Dx_o-x_oa_D)e_{k+1,k+1}
$$
and
$$
a_{D_o}^{k,k}-a_{D_o}^{k+1,k+1}=a_D^{k,k}-a_D^{k+1,k+1}.
$$
Then for the sequence
$$
(k,k+1),(k+1,k+2)\dots (l-1,l)
$$
we have
$$
a_{D_o}^{k,k}-a_{D_o}^{k+1,k+1}=a_D^{k,k}-a_D^{k+1,k+1},
a_{D_o}^{k+1,k+1}-a_{D_o}^{k+2,k+2}=a_D^{k+1,k+1}-a_D^{k+2,k+2},\dots
$$
$$
a_{D_o}^{l-1,l-1}-a_{D_o}^{l,l}=a_D^{l-1,l-1}-a_D^{l,l}.
$$
Hence
$$
a_{D_o}^{k,k}-a_D^{k,k}=a_{D_o}^{k+1,k+1}-a_D^{k+1,k+1},
a_{D_o}^{k+1,k+1}-a_D^{k+1,k+1}=a_{D_o}^{k+2,k+2}-a_D^{k+2,k+2},\dots
$$
$$
a_{D_o}^{l-1,l-1}-a_D^{l-1,l-1}=a_{D_o}^{l,l}-a_D^{l,l}.
$$
Therefore $a_{D_o}^{k,k}-a_D^{k,k}=a_{D_o}^{l,l}-a_D^{l,l}$, i.e.
$a_{D_o}^{k,k}-a_{D_o}^{l,l}=a^{k,k}-a^{l,l}=a_D^{k,k}-a_D^{l,l}$. The proof is complete.
\end{proof}

In these notations the following propositions are valid.

\begin{proposition} \label{3.41}
Let $\Delta$ be a 2-local derivation on $H_n(\Re)$,
$i$, $j$, $p$ be arbitrary pairwise distinct indices in $\{1,2,\dots n\}$, and,
let $D$ be the inner derivation on $H_n(\Re)$ such that
$$
\Delta (\bar{e}_{i,j})=D(\bar{e}_{i,j}).
$$
Then the following equalities hold relative to the associative multiplication
$$
e_{i,i}a_De_{p,p}=a_{i,p}, \,\,\,
e_{p,p}a_De_{j,j}=a_{p,j}.
$$
\end{proposition}

\begin{proof}
Let $\Delta$ be a 2-local derivation on $H_n(\Re)$,
let $D_1$, $D_2$ be the inner derivations on $H_n(\Re)$ such that
$$
\Delta (\bar{e}_{i,j})=D_1(\bar{e}_{i,j}), \Delta (\bar{e}_{i,j})=D_2(\bar{e}_{i,j}),
$$
$$
\Delta (\bar{e}_{i,p})=D_1(\bar{e}_{i,p}), \Delta (\bar{e}_{p,j})=D_2(\bar{e}_{p,j}).
$$
Then
$$
a_D\bar{e}_{i,j}-\bar{e}_{i,j}a_D=a_{D_1}\bar{e}_{i,j}-\bar{e}_{i,j}a_{D_1}=a_{D_2}\bar{e}_{i,j}-\bar{e}_{i,j}a_{D_2},
$$
$$
a_{i,p}=e_{i,i}a_{D_1}e_{p,p},  a_{p,j}=e_{p,p}a_{D_2}e_{j,j}.   \eqno{(*)}
$$
Also,
$$
a_D\bar{e}_{i,j}e_{p,p}-\bar{e}_{i,j}a_De_{p,p}=a_{D_1}\bar{e}_{i,j}e_{p,p}-\bar{e}_{i,j}a_{D_1}e_{p,p},
$$
$$
e_{p,p}a_D\bar{e}_{i,j}-e_{p,p}\bar{e}_{i,j}a_D=e_{p,p}a_{D_2}\bar{e}_{i,j}-e_{p,p}\bar{e}_{i,j}a_{D_2}
$$
and
$$
\bar{e}_{i,j}a_De_{p,p}=\bar{e}_{i,j}a_{D_1}e_{p,p}, e_{p,p}a_D\bar{e}_{i,j}=e_{p,p}a_{D_2}\bar{e}_{i,j}.
$$
Hence,
$$
e_{i,i}a_De_{p,p}=e_{i,i}a_{D_1}e_{p,p}, e_{p,p}a_De_{j,j}=e_{p,p}a_{D_2}e_{j,j}.
$$
By (*), we have
$$
e_{i,i}a_De_{p,p}=a_{i,p},  e_{p,p}a_De_{j,j}=a_{p,j}.
$$
These end the proof.
\end{proof}

\begin{proposition} \label{3.5}
Let $\Delta$ be a 2-local inner derivation on $H_n(\Re)$, $i$, $j$ be arbitrary distinct indices, and,
let $D$ be the inner derivation on $H_n(\Re)$ such that
$$
\Delta (\bar{e}_{i,j})=D(\bar{e}_{i,j}).
$$
Then the following equality is valid
$$
\Delta(\bar{e}_{i,j})=(\sum_{k,l=1, k\neq l}^n a_{k,l})\bar{e}_{i,j}-\bar{e}_{i,j}(\sum_{k,l=1, k\neq l}^n a_{k,l})
+(a_D^{i,i}-a_D^{j,j})e_{i,j}+(a_D^{j,j}-a_D^{i,i})e_{j,i}.
$$
\end{proposition}

\begin{proof}
By Proposition \ref{3.41}, we have
$$
\Delta(\bar{e}_{i,j})=a_D\bar{e}_{i,j}-\bar{e}_{i,j}a_D=\sum_{k=1}^ne_{k,k}
a_D\bar{e}_{i,j}-\sum_{k=1}^n\bar{e}_{i,j}a_De_{k,k}
$$
$$
=\sum_{k=1, k\neq i, k\neq j}^ne_{k,k} a_D\bar{e}_{i,j}-\sum_{k=1, k\neq i, k\neq
j}^n\bar{e}_{i,j}a_De_{k,k}
$$
$$
+(e_{i,i}+e_{j,j})a_D\bar{e}_{i,j}-\bar{e}_{i,j}a_D(e_{i,i}+e_{j,j})
$$
$$
=\sum_{k=1, k\neq i, k\neq j}^ne_{k,k} a_De_{i,j}-\sum_{k=1, k\neq i, k\neq j}^n e_{i,j}a_De_{k,k}
$$
$$
+\sum_{k=1, k\neq i, k\neq j}^ne_{k,k} a_D e_{j,i}-\sum_{k=1, k\neq i, k\neq j}^n e_{j,i}a_De_{k,k}
$$
$$
+(e_{i,i}+e_{j,j})a_D\bar{e}_{i,j}-\bar{e}_{i,j}a_D(e_{i,i}+e_{j,j})
$$
$$
=\sum_{k=1, k\neq i, k\neq j}^n a_{k,i}e_{i,j}-\sum_{k=1, k\neq i, k\neq j}^n e_{i,j}a_{j,k}
$$
$$
+\sum_{k=1, k\neq i, k\neq j}^n a_{k,j}e_{j,i}-\sum_{k=1, k\neq i, k\neq j}^n e_{j,i}a_{i,k}
$$
$$
+(e_{i,i}+e_{j,j})a_D\bar{e}_{i,j}-\bar{e}_{i,j}a_D(e_{i,i}+e_{j,j}).
$$
At the same time
$$
(e_{i,i}+e_{j,j})a_D\bar{e}_{i,j}-\bar{e}_{i,j}a_D(e_{i,i}+e_{j,j})
$$
$$
=e_{i,i}a_De_{i,j}+e_{i,i}a_De_{j,i}+e_{j,j}a_De_{i,j}+e_{j,j}a_De_{j,i}
$$
$$
-e_{i,j}a_De_{i,i}-e_{j,i}a_De_{i,i}-e_{i,j}a_De_{j,j}-e_{j,i}a_De_{j,j}
$$
$$
=(a_D^{i,i}-a_D^{j,j})e_{i,j}+(a_D^{j,j}-a_D^{i,i})e_{j,i}
$$
$$
+e_{i,i}a_De_{j,i}+e_{j,j}a_De_{i,j}-e_{i,j}a_De_{i,i}-e_{j,i}a_De_{j,j}
$$
$$
=(a_D^{i,i}-a_D^{j,j})e_{i,j}+(a_D^{j,j}-a_D^{i,i})e_{j,i}
$$
$$
+a_{i,j}e_{j,i}+a_{j,i}e_{i,j}-e_{i,j}a_{j,i}-e_{j,i}a_{i,j}.
$$
by Proposition \ref{3.31}. Hence,
$$
\Delta(\bar{e}_{i,j})
$$
$$
=(\sum_{k,l=1, k\neq l}^n a_{k,l})e_{i,j}-e_{i,j}(\sum_{k,l=1, k\neq l}^n a_{k,l})+
(\sum_{k,l=1, k\neq l}^n a_{k,l})e_{j,i}-e_{j,i}(\sum_{k,l=1, k\neq l}^n a_{k,l})
$$
$$
+(a_D^{i,i}-a_D^{j,j})e_{i,j}+(a_D^{j,j}-a_D^{i,i})e_{j,i}
$$
$$
=(\sum_{k,l=1, k\neq l}^n a_{k,l})\bar{e}_{i,j}-\bar{e}_{i,j}(\sum_{k,l=1, k\neq l}^n a_{k,l})
$$
$$
+(a_D^{i,i}-a_D^{j,j})e_{i,j}+(a_D^{j,j}-a_D^{i,i})e_{j,i}.
$$
This ends the proof.
\end{proof}

\begin{proposition}  \label{3.7}
Let $\Delta$ be an arbitrary inner 2-local derivation on $H_n(\Re)$.
Let $i$, $j$ be distinct indices in $\{1,2,\dots n\}$, and, let $D$ be an inner derivation such that
$$
\Delta (\bar{e}_{i,j})=D(\bar{e}_{i,j}).
$$
Then
$$
(1-e_{i,i})a_{D}e_{i,i}=(1-e_{i,i})\left(\sum_{k,q=1, k\neq q}^n a_{k,q}\right)e_{i,i},   \eqno{(6)}
$$
$$
e_{j,j}a_{D}(1-e_{j,j})=e_{j,j}\left(\sum_{k,q=1, k\neq q}^n a_{k,q})(1-e_{j,j}\right),   \eqno{(7)}
$$
$$
e_{i,i}a_{D}(1-e_{i,i})=e_{i,i}\left(\sum_{k,q=1, k\neq q}^n a_{k,q})(1-e_{i,i}\right),   \eqno{(8)}
$$
$$
(1-e_{j,j})a_{D}e_{j,j}=(1-e_{j,j})\left(\sum_{k,q=1, k\neq q}^n a_{k,q}\right)e_{j,j},
$$
$$
a^{i,i}-a^{j,j}=a_{D}^{i,i}-a_{D}^{j,j},
a^{j,j}-a^{i,i}=a_{D}^{j,j}-a_{D}^{i,i}.   \eqno{(9)}
$$
\end{proposition}

\begin{proof}
By the condition of the proposition, we have
$$
\Delta (\bar{e}_{i,j})=a_{D}\bar{e}_{i,j}-\bar{e}_{i,j}a_{D}, \Delta (x)=a_{D}x-xa_{D}.
$$
By Proposition \ref{3.5} we also have the following equalities
$$
\Delta(\bar{e}_{i,j})=a_D(\bar{e}_{i,j})-(\bar{e}_{i,j})a_D
$$
$$
=(e_{i,i}+e_{j,j})a_D(\bar{e}_{i,j})-(\bar{e}_{i,j})a_D(e_{i,i}+e_{j,j})
$$
$$
+(1-(e_{i,i}+e_{j,j}))a_D(\bar{e}_{i,j})-(\bar{e}_{i,j})a_D(1-(e_{i,i}+e_{j,j}))
$$
$$
=\left(\sum_{k,q=1, k\neq q}^n a_{k,q}\right)\bar{e}_{i,j}-\bar{e}_{i,j}\left(\sum_{k,q=1, k\neq q}^n a_{k,q}\right)
$$
$$
+\left(a_D^{i,i}-a_D^{j,j}\right)e_{i,j}+\left(a_D^{j,j}-a_D^{i,i}\right)e_{j,i}.
$$
Multiplying the last equality by $e_{i,i}+e_{j,j}$ on both, the left and right sides yields
$$
(e_{i,i}+e_{j,j})a_D(\bar{e}_{i,j})-(\bar{e}_{i,j})a_D(e_{i,i}+e_{j,j})
$$
$$
=(a_{i,j}+a_{j,i})\bar{e}_{i,j}-\bar{e}_{i,j}(a_{i,j}+a_{j,i})
$$
$$
+\left(a_D^{i,i}-a_D^{j,j}\right)e_{i,j}+\left(a_D^{j,j}-a_D^{i,i}\right)e_{j,i}.
$$
Hence,
$$
(1-(e_{i,i}+e_{j,j}))a_D\bar{e}_{i,j}+\bar{e}_{i,j}a_D(1-(e_{i,i}+e_{j,j}))
$$
$$
=(1-(e_{i,i}+e_{j,j}))\left(\sum_{k,q=1, k\neq q}^n a_{k,q}\right)\bar{e}_{i,j}+\bar{e}_{i,j}\left(\sum_{k,q=1, k\neq q}^n a_{k,q}\right)(1-(e_{i,i}+e_{j,j})).
$$
Multiplying the last equality by $e_{i,i}+e_{j,j}$ on the right side yields
$$
(1-(e_{i,i}+e_{j,j}))a_D\bar{e}_{i,j}=(1-(e_{i,i}+e_{j,j}))\left(\sum_{k,q=1, k\neq q}^n a_{k,q}\right)\bar{e}_{i,j}.
$$
Hence,
$$
\bar{e}_{i,j}a_D(1-(e_{i,i}+e_{j,j}))=\bar{e}_{i,j}\left(\sum_{k,q=1, k\neq q}^n a_{k,q}\right)(1-(e_{i,i}+e_{j,j})).
$$
From these it follows that
$$
(1-(e_{i,i}+e_{j,j}))a_De_{i,i}=(1-(e_{i,i}+e_{j,j}))\left(\sum_{k,q=1, k\neq q}^n a_{k,q}\right)e_{i,i},
$$
$$
e_{j,j}a_D(1-(e_{i,i}+e_{j,j}))=e_{j,j}\left(\sum_{k,q=1, k\neq q}^n a_{k,q}\right)(1-(e_{i,i}+e_{j,j})).
$$
At the same time
$$
e_{j,j}a_De_{i,i}=a_{j,i}
$$
by Proposition \ref{3.31}, and, hence,
$$
(1-e_{i,i})a_De_{i,i}=(1-e_{i,i})\left(\sum_{k,q=1, k\neq q}^n a_{k,q}\right)e_{i,i},
$$
$$
e_{j,j}a_D(1-e_{j,j})=e_{j,j}\left(\sum_{k,q=1, k\neq q}^n a_{k,q})(1-e_{j,j}\right).
$$
Similarly we have
$$
e_{i,i}a_D(1-e_{i,i})=e_{i,i}\left(\sum_{k,q=1, k\neq q}^n a_{k,q})(1-e_{i,i}\right),
$$
$$
(1-e_{j,j})a_De_{j,j}=(1-e_{j,j})\left(\sum_{k,q=1, k\neq q}^n a_{k,q}\right)e_{j,j}.
$$

Let $D_o$ be an inner derivation such that
$$
\Delta(\bar{e}_{i,j})=a_{D_o}\bar{e}_{i,j}-\bar{e}_{i,j}a_{D_o},
\Delta(x_o)=a_{D_o}x_o-x_oa_{D_o}.
$$

Let $a_{D_o}=\sum_{k,q=1}^n a_{D_o}^{k,q}e_{k,q}$. Then $a_{D_o}^{i,i}-a_{D_o}^{j,j}=a^{i,i}-a^{j,j}$ by Proposition \ref{3.6}. We have
$a_{D_o}^{i,i}-a_{D_o}^{j,j}=a_D^{i,i}-a_D^{j,j}$ since
$$
a_{D_o}\bar{e}_{i,j}-\bar{e}_{i,j}a_{D_o}=a_D\bar{e}_{i,j}-\bar{e}_{i,j}a_D.
$$
Hence
$$
a^{i,i}-a^{j,j}=a_D^{i,i}-a_D^{j,j},
a^{j,j}-a^{i,i}=a_D^{j,j}-a_D^{i,i}.
$$
The proof is complete.
\end{proof}

\begin{proposition}  \label{3.8}
Let $\Delta$ be an arbitrary inner 2-local derivation on $H_n(\Re)$.
Let $i$, $j$ be distinct indices in $\{1,2,\dots n\}$, and, let $D$, $\bar{D}$ be inner derivations such that
$$
\Delta (\bar{e}_{i,j})=D(\bar{e}_{i,j}), \,\, \Delta (e_{i,i})=\bar{D}(e_{i,i}).
$$
Then
$$
(1-e_{i,i})a_{\bar{D}}e_{i,i}=(1-e_{i,i})a_De_{i,i}, \,\,\,  e_{i,i}a_{\bar{D}}(1-e_{i,i})=e_{i,i}a_D(1-e_{i,i}),   \eqno{(10)}
$$
\end{proposition}

\begin{proof}
Let $v\in M_n(\Re)$ be an element such that
$$
\Delta(e_{i,i})=ve_{i,i}-e_{i,i}v,
\Delta(\bar{e}_{i,j})=v\bar{e}_{i,j}-\bar{e}_{i,j}v.
$$
Then
$$
(1-e_{i,i})ve_{i,i}-(1-e_{i,i})e_{i,i}v=(1-e_{i,i})a_{\bar{D}}e_{i,i}-(1-e_{i,i})e_{i,i}a_{\bar{D}},
$$
$$
ve_{i,i}(1-e_{i,i})-e_{i,i}v(1-e_{i,i})=a_{\bar{D}}e_{i,i}(1-e_{i,i})-e_{i,i}a_{\bar{D}}(1-e_{i,i}),
$$
$$
(1-(e_{i,i}+e_{j,j}))v\bar{e}_{i,j}-(1-(e_{i,i}+e_{j,j}))\bar{e}_{i,j}v
$$
$$
=(1-(e_{i,i}+e_{j,j}))a_D\bar{e}_{i,j}-(1-(e_{i,i}+e_{j,j}))\bar{e}_{i,j}a_D,
$$
$$
v\bar{e}_{i,j}(1-(e_{i,i}+e_{j,j}))-\bar{e}_{i,j}v(1-(e_{i,i}+e_{j,j}))
$$
$$
=a_D\bar{e}_{i,j}(1-(e_{i,i}+e_{j,j}))-\bar{e}_{i,j}a_D(1-(e_{i,i}+e_{j,j})).
$$
From this it follows that
$$
(1-e_{i,i})ve_{i,i}=(1-e_{i,i})a_{\bar{D}}e_{i,i}, e_{i,i}v(1-e_{i,i})=e_{i,i}a_{\bar{D}}(1-e_{i,i}),
$$
$$
(1-(e_{i,i}+e_{j,j}))ve_{i,j}=(1-(e_{i,i}+e_{j,j}))a_De_{i,j},
$$
$$
e_{j,i}v(1-(e_{i,i}+e_{j,j}))=e_{j,i}a_D(1-(e_{i,i}+e_{j,j})).
$$
By Proposition \ref{3.31}, $e_{j,j}ve_{i,i}=e_{j,j}a_De_{i,i}$, $e_{i,i}ve_{j,j}=e_{i,i}a_De_{j,j}$.
Hence,
$$
(1-(e_{i,i}+e_{j,j}))ve_{i,i}+e_{j,j}ve_{i,i}=(1-(e_{i,i}+e_{j,j}))a_De_{i,i}+e_{j,j}a_De_{i,i},
$$
$$
e_{i,i}v(1-(e_{i,i}+e_{j,j}))+e_{i,i}ve_{j,j}=e_{i,i}a_D(1-(e_{i,i}+e_{j,j}))+e_{i,i}a_De_{j,j}
$$
and
$$
(1-e_{i,i})ve_{i,i}=(1-e_{i,i})a_De_{i,i}, e_{i,i}v(1-e_{i,i})=e_{i,i}a_D(1-e_{i,i}).
$$
Therefore
$$
(1-e_{i,i})a_De_{i,i}=(1-e_{i,i})ve_{i,i}=(1-e_{i,i})a_{\bar{D}}e_{i,i},
$$
and
$$
e_{i,i}a_D(1-e_{i,i})=e_{i,i}v(1-e_{i,i})=e_{i,i}a_{\bar{D}}(1-e_{i,i}),
$$
i.e.,
$$
(1-e_{i,i})a_{\bar{D}}e_{i,i}=(1-e_{i,i})a_De_{i,i}, \,\,\,  e_{i,i}a_{\bar{D}}(1-e_{i,i})=e_{i,i}a_D(1-e_{i,i}).
$$
This ends the proof.
\end{proof}

\medskip

{\it Proof of Theorem \ref{3.11}.}
Let $\Delta$ be an arbitrary inner 2-local derivation on $H_n(\Re)$.
We will prove that $\Delta$ is a derivation on $H_n(\Re)$.

Let $x$ be an arbitrary element in $H_n(\Re)$, $i$, $j$ be distinct indices in $\{1,2,\dots n\}$, and, let
$D^{i,j}$ be an inner derivation such that
$$
\Delta (\bar{e}_{i,j})=D^{i,j}(\bar{e}_{i,j}), \Delta (x)=D^{i,j}(x)
$$
and $d(ij)=a_{D^{i,j}}$. Then, by (6), (7) and (9) of Proposition \ref{3.7}, we have
$$
e_{j,j}\Delta(x)e_{i,i}=e_{j,j}(d(ij)x-xd(ij))e_{i,i}
$$
$$
=e_{j,j}d(ij)(1-e_{j,j})xe_{i,i}+
e_{j,j}d(ij)e_{j,j}xe_{i,i}-e_{j,j}x(1-e_{i,i})d(ij)e_{i,i}-e_{j,j}xe_{i,i}d(ij)e_{i,i}
$$
$$
=e_{j,j}\left( \sum_{\xi,\eta=1, \xi\neq\eta}^n a^{\xi,\eta}e_{\xi,\eta}\right) xe_{i,i}
-e_{j,j}x \left(\sum_{\xi,\eta=1, \xi\neq\eta}^n a^{\xi,\eta}e_{\xi,\eta}\right)e_{i,i}
$$
$$
+ e_{j,j}d(ij)e_{j,j}xe_{i,i}-e_{j,j}xe_{i,i}d(ij)e_{i,i}
$$
$$
=e_{j,j}\left( \sum_{\xi,\eta=1,\xi\neq\eta}^n a^{\xi,\eta}e_{\xi,\eta}\right)xe_{i,i}
-e_{j,j}x\left(\sum_{\xi,\eta=1,\xi\neq\eta}^n a^{\xi,\eta}e_{\xi,\eta}\right)e_{i,i}
$$
$$
+e_{j,j}a^{j,j}e_{j,j}xe_{i,i}-e_{j,j}xe_{i,i}a^{i,i}e_{i,i}
$$
$$
=e_{j,j}\left(\sum_{\xi,\eta=1,\xi\neq\eta}^n a^{\xi,\eta}e_{\xi,\eta}\right)xe_{i,i}
-e_{j,j}x\left(\sum_{\xi,\eta=1,\xi\neq\eta}^n a^{\xi,\eta}e_{\xi,\eta}\right)e_{i,i}
$$
$$
+e_{j,j}\left(\sum_{\xi=1}^n a^{\xi,\xi}e_{\xi,\xi})\right)xe_{i,i}-e_{j,j}x\left(\sum_{\xi=1}^n a^{\xi,\xi}e_{\xi,\xi}\right)e_{i,i}
$$
$$
=e_{j,j}\left(\sum_{\xi,\eta=1}^n a^{\xi,\eta}e_{\xi,\eta}\right)xe_{i,i}-e_{j,j}x\left(\sum_{\xi,\eta=1}^n a^{\xi,\eta}e_{\xi,\eta}\right)e_{i,i}=
e_{j,j}(\bar{a}x-x\bar{a})e_{i,i}.
$$
So,
\[
e_{j,j}\Delta(x)e_{i,i}=e_{j,j}(\bar{a}x-x\bar{a})e_{i,i}, \,\,\, i,j=1,2,\dots ,n.     \eqno{(11)}
\]

\medskip

Let $d(ii)\in M_n(\Re)$ be an element such that
$$
\Delta(e_{i,i})=d(ii)e_{i,i}-e_{i,i}d(ii) \,\, \text{and}\,\,
\Delta(x)=d(ii)x-xd(ii),
$$
Then, by (6), (8) of Proposition \ref{3.7} and (10) of Proposition \ref{3.8} we have
$$
e_{i,i}\Delta(x)e_{i,i}=e_{i,i}(d(ii)x-xd(ii))e_{i,i}
$$
$$
=e_{i,i}d(ii)(1-e_{i,i})xe_{i,i}+
e_{i,i}d(ii)e_{i,i}xe_{i,i}-e_{i,i}x(1-e_{i,i})d(ii)e_{i,i}-e_{i,i}xe_{i,i}d(ii)e_{i,i}
$$
$$
=e_{i,i}d(ij)(1-e_{i,i})xe_{i,i}+
e_{i,i}d(ii)e_{i,i}xe_{i,i}-e_{i,i}x(1-e_{i,i})d(ij)e_{i,i}-e_{i,i}xe_{i,i}d(ii)e_{i,i}
$$
$$
=e_{i,i}\left(\sum_{\xi,\eta=1,\xi\neq\eta}^n a^{\xi,\eta}e_{\xi,\eta}\right)xe_{i,i}
-e_{i,i}x\left(\sum_{\xi,\eta=1,\xi\neq\eta}^n a^{\xi,\eta}e_{\xi,\eta}\right)e_{i,i}
$$
$$
+e_{i,i}d(ii)e_{i,i}xe_{i,i}-e_{i,i}xe_{i,i}d(ii)e_{i,i}
=e_{i,i}\left(\sum_{\xi,\eta=1,\xi\neq\eta}^n a^{\xi,\eta}e_{\xi,\eta}\right)xe_{i,i}
$$
$$
-e_{i,i}x\left(\sum_{\xi,\eta=1,\xi\neq\eta}^n a^{\xi,\eta}e_{\xi,\eta}\right)e_{i,i}
+a^{i,i}e_{i,i}xe_{i,i}-e_{i,i}xa^{i,i}e_{i,i}
$$
$$
=e_{i,i}\left(\sum_{\xi,\eta=1,\xi\neq\eta}^n a^{\xi,\eta}e_{\xi,\eta}\right)xe_{i,i}
-e_{i,i}x\left(\sum_{\xi,\eta=1,\xi\neq\eta}^n a^{\xi,\eta}e_{\xi,\eta}\right)e_{i,i}
$$
$$
+e_{i,i}\left(\sum_{\xi=1}^n a^{\xi,\xi}e_{\xi,\xi}\right)xe_{i,i}
-e_{i,i}x\left(\sum_{\xi=1}^n a^{\xi,\xi}e_{\xi,\xi}\right)e_{i,i}
$$
$$
=e_{i,i}\left(\sum_{\xi,\eta=1}^n a^{\xi,\eta}e_{\xi,\eta}\right)xe_{i,i}
-e_{i,i}x\left(\sum_{\xi,\eta=1}^n a^{\xi,\eta}e_{\xi,\eta}\right)e_{i,i}=e_{i,i}(\bar{a}x-x\bar{a})e_{i,i}.
$$
Hence,
$$
e_{i,i}\Delta(x)e_{i,i}=e_{i,i}(\bar{a}x-x\bar{a})e_{i,i}, \,\,\, i=1,2,\dots ,n.   \eqno{(12)}
$$

Thus, by (11) and (12) we have
$$
\Delta(x)=\sum_{i,j=1}^ne_{i,i}\Delta(x)e_{j,j}=\sum_{i,j=1}^ne_{i,i}(\bar{a}x-x\bar{a})e_{j,j}=\bar{a}x-x\bar{a}
$$
for each $x\in H_n(\Re)$. Note that, by the construction, $\bar{a}$ is a self-adjoint element, i.e. $\bar{a}\in H_n(\Re)$,
and $\bar{a}x-x\bar{a}\in H_n(\Re)$ for each $x\in H_n(\Re)$.
It can be straightforwardly checked that $\Delta$ is a derivation on $H_n(\Re)$.
The proof is complete.
\hfill $\Box$

The proof of Theorem \ref{3.11} is also valid for Jordan algebras of self-adjoint matrices over
a commutative involutive associative algebra. Therefore, if we define a derivation, an inner derivation, a 2-local derivation, a 2-local inner derivation
on Jordan algebras similarly to the case of Jordan rings, then the following theorem
takes place.

\begin{theorem} \label{2.7}
Let $\mathcal{A}$ be a unital commutative involutive associative algebra, and let $H_n(\mathcal{A})$
be the Jordan algebra of all self-adjoint $n\times n$ matrices over $\mathcal{A}$. Then
any 2-local inner derivation on the Jordan algebra $H_n(\mathcal{A})$ is a derivation.
\end{theorem}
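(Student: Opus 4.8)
The plan is to deduce this theorem directly from Theorem \ref{3.11} by forgetting the scalar multiplication of $\mathcal{A}$. First I would observe that a unital commutative involutive algebra $\mathcal{A}$ is, in particular, a unital commutative involutive ring, and that $2$ remains invertible in it. Consequently the Jordan algebra $H_n(\mathcal{A})$ of self-adjoint matrices coincides, as a set equipped with addition and the Jordan product $a\cdot b=\frac{1}{2}(ab+ba)$, with the Jordan ring $H_n(\Re)$ obtained by taking $\Re=\mathcal{A}$. Thus every structural statement proved for $H_n(\Re)$ --- in particular Lemmas \ref{3.1}, \ref{3.4}, \ref{3.41}, \ref{3.5}, \ref{3.6} and Theorem \ref{3.11} --- applies verbatim to $H_n(\mathcal{A})$.

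Next I would check that the two notions of ``2-local inner derivation'', one in the algebra sense and one in the ring sense, coincide on $H_n(\mathcal{A})$. The point is that an inner derivation is always given by the same commutator formula $D_{\bar{a}}(x)=\bar{a}x-x\bar{a}$ (equivalently by a sum $\sum_k (a_k\cdot(b_k\cdot x)-b_k\cdot(a_k\cdot x))$), and this formula makes no reference to the scalar action; hence the inner derivations of the algebra and of the underlying ring are literally the same maps. Therefore any 2-local inner derivation $\Delta$ of the Jordan algebra $H_n(\mathcal{A})$ is, a fortiori, a 2-local inner derivation of the Jordan ring $H_n(\mathcal{A})$.

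Now I would invoke Theorem \ref{3.11}: it produces an element $\bar{a}\in M_n(\mathcal{A})$ such that $\Delta(x)=\bar{a}x-x\bar{a}$ for every $x\in H_n(\mathcal{A})$. This already shows that $\Delta$ is additive and satisfies the Leibniz rule for the Jordan product, so it is a derivation of the Jordan ring. To upgrade it to an algebra derivation it only remains to record that this inner map respects scalars: for any scalar $\alpha$ one has $\bar{a}(\alpha x)-(\alpha x)\bar{a}=\alpha(\bar{a}x-x\bar{a})$, so $\Delta(\alpha x)=\alpha\Delta(x)$. Hence $\Delta$ is a (linear) derivation of $H_n(\mathcal{A})$.

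The only genuine thing to be careful about is the bookkeeping in the second paragraph --- namely, confirming that the passage between the algebra and the ring viewpoints neither enlarges nor shrinks the class of inner derivations, so that a 2-local inner derivation in one sense is one in the other. Once that identification is in place, the rest is an immediate transfer of Theorem \ref{3.11} together with the automatic scalar-linearity of commutator maps; there is no additional analytic or combinatorial difficulty.
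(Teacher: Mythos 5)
Your proposal is correct and takes essentially the same route as the paper, which proves Theorem \ref{2.7} simply by observing that the proof of Theorem \ref{3.11} remains valid verbatim for self-adjoint matrices over a commutative involutive algebra. Your write-up merely makes explicit the two bookkeeping points the paper leaves tacit: that the inner (hence 2-local inner) derivations of the Jordan ring and of the Jordan algebra $H_n(\mathcal{A})$ are literally the same maps, and that the commutator form $\Delta(x)=\bar{a}x-x\bar{a}$ extracted from the proof of Theorem \ref{3.11} is automatically scalar-homogeneous (alternatively, homogeneity follows directly from 2-locality, since inner derivations are linear), so $\Delta$ is a linear derivation of the algebra.
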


\section{2-local derivations on Jordan algebras of self-adjoint operator-valued
maps}

Throughout the present section, let $\aleph_o$ be the countable cardinal
number,
$H$ be a separable Hilbert space of dimension $\aleph_o$ over ${\mathbb F}={\mathbb R}$, ${\mathbb C}$, and, let $B(H)$ be the algebra of
all bounded linear operators on $H$. Let $\{e_i\}_{i=1}^\infty$ be a maximal family of orthogonal minimal
projections in $B(H)$ and, let $\{e_{i,j}\}_{i,j=1}^\infty$ be the family of matrix units defined
by $\{e_i\}_{i=1}^\infty$, i.e. $e_{i,i}=e_i$, $e_{i,i}=e_{i,j}e_{j,i}$ and $e_{j,j}=e_{j,i}e_{i,j}$ for
each pair of natural numbers $i$, $j$.

Let $B_{sa}(H)$ be the vector space of all self-adjoint operators in $B(H)$, i.e.
$$
B_{sa}(H)=\{a\in B(H): a^*=a\}.
$$
Then with respect to Jordan multiplication
$$
a\cdot b=\frac{1}{2}(ab+ba), a,b\in B_{sa}(H)
$$
$B_{sa}(H)$ is a Jordan algebra.

Let $\Omega$ be an arbitrary set, $M(\Omega,B_{sa}(H))$ be the
Jordan algebra of all maps from $\Omega$ to $B_{sa}(H)$. Put
$$
\hat{e}_{i,j}={\bf 1}e_{i,j},
$$
where ${\bf 1}$ is the unit of the algebra $M(\Omega)$ of all ${\mathbb F}$-valued maps on $\Omega$.
Throughout the rest sections, put $s_{i,j}=\hat{e}_{i,j}+\hat{e}_{j,i}$ for all
distinct natural numbers $i$, $j$.

\medskip

\begin{definition}
Let $A$ be a Jordan algebra and, let $B$ be a Jordan subalgebra of $A$.
A derivation $D$ on $B$ is said to be spatial, if $D$ is implemented by elements from $A$,
i.e.,
$$
D(x)=\sum_{k=1}^m D_{a_k,b_k}(x)=\sum_{k=1}^m(a_k\cdot (b_k\cdot x)-b_k\cdot (a_k\cdot x)), x\in B,
$$
for some $a_1$, $a_2$, $\dots,$ $a_m$, $b_1$, $b_2$, $\dots,$ $b_m\in A$.
A 2-local derivation $\Delta$ on $B$ is called 2-local spatial derivation
implemented by elements from $A$, if for every two elements $x$, $y\in B$ there
exists a spatial derivation $D$
on $B$ such that $\Delta(x)=D(x)$, $\Delta(y)=D(y)$.
\end{definition}

Let, throughout the present section, $\Omega$ be an arbitrary set, $M(\Omega,B_{sa}(H))$ be the
Jordan algebra of all maps from $\Omega$ to $B_{sa}(H)$.
Let $(\lambda_n)$ be a sequence of nonzero numbers from ${\mathbb F}$ such that $\sum_{n=1}^\infty \lambda_n\lambda_n^*<\infty$ and
$x_o=\sum_{n=1}^\infty\lambda_n s_{n,n+1}\in M(\Omega,B_{sa}(H))$.
Let
$\mathcal{J}$ be a Jordan subalgebra of $M(\Omega,B_{sa}(H))$ containing the element $x_o$ and the family
$\{\hat{e}_{i,i}\}_{i=1}^\infty\cup\{s_{i,j}\}_{i,j=1,i\neq j}^\infty$, and $\Delta$ be a 2-local spatial derivation
on $\mathcal{J}$ implemented by elements from $M(\Omega,B_{sa}(H))$. Then the following propositions hold.

\begin{proposition}  \label{3.4111}
Let $\Delta$ be a 2-local derivation on $M(\Omega,B_{sa}(H))$, and,
let $i_1$, $i_2$, $\dots$, $i_n$ be arbitrary pairwise distinct natural numbers, $e=\hat{e}_{i_1,i_1}+\hat{e}_{i_2,i_2}+\dots+\hat{e}_{i_n,i_n}$.
Then the mapping
$$
\Delta_{e}(x)=e\Delta(x)e, x\in eM(\Omega,B_{sa}(H))e
$$
is a derivation on $eM(\Omega,B_{sa}(H))e$ and there exists an element $d\in eM(\Omega,B(H))e$
such that
$$
\Delta_{e}(x)=dx-xd, x\in eM(\Omega,B_{sa}(H))e.
$$
If $f=\hat{e}_{j_1,j_1}+\hat{e}_{j_2,j_2}+\dots+\hat{e}_{j_p,j_p}$ for indices $j_1$, $j_2$, $\dots$, $j_p$ from $i_1$, $i_2$, $\dots$, $i_n$,
then
$$
\hat{e}_{i,i}d\hat{e}_{j,j}=\hat{e}_{i,i}c\hat{e}_{j,j}, d^{i,i}-d^{j,j}=c^{i,i}-c^{j,j}
$$
for arbitrary distinct indices $i$, $j$ from $\{j_1, j_2, \dots, j_p\}$,
where $c$ is an element in $fM(\Omega,B(H))f$ such that
$$
\Delta_f(x)=f\Delta(x)f, x\in fM(\Omega,B_{sa}(H))f.
$$
\end{proposition}

\begin{proof}
Similar to proof of \cite[Proposition 2.7]{NP} it can be proved that
$\Delta_{e}$ is a 2-local derivation on $eM(\Omega,B_{sa}(H))e$. We have
$eM(\Omega,B(H))e\cong M_n({\mathbb F})\otimes M(\Omega)$. Therefore,
by Theorem \ref{3.11} and its proof $\Delta_{e}$ is a derivation and there exists
an element $d$ in $eM(\Omega,B(H))e$ such that
$$
\Delta_{e}(x)=dx-xd, x\in eM(\Omega,B_{sa}(H))e.
$$
Similarly, there exists
an element $c$ in $fM(\Omega,B(H))f$ such that
$$
\Delta_{f}(x)=dx-xd, x\in fM(\Omega,B_{sa}(H))f.
$$
For arbitrary distinct $i$ and $j$ from $\{j_1, j_2, \dots, j_p\}$ we have
$$
\Delta_{f}(\hat{e}_{i,i})=f\Delta_{e}(\hat{e}_{i,i})f,
$$
i.e.,
$$
c\hat{e}_{i,i}-\hat{e}_{i,i}c=fd\hat{e}_{i,i}-\hat{e}_{i,i}df.
$$
Hence,
\[
\hat{e}_{i,i}(c\hat{e}_{i,i}-\hat{e}_{i,i}c)\hat{e}_{j,j}=\hat{e}_{i,i}(fd\hat{e}_{i,i}-\hat{e}_{i,i}df)\hat{e}_{j,j}
\]
and
\[
\hat{e}_{i,i}c\hat{e}_{j,j}=\hat{e}_{i,i}d\hat{e}_{j,j}.
\]
We also have
$$
\Delta_{f}(s_{i,j})=f\Delta_{e}(s_{i,j})f,
$$
i.e.,
$$
cs_{i,j}-s_{i,j}c=fds_{i,j}-s_{i,j}df.
$$
Hence,
\[
\hat{e}_{i,i}(cs_{i,j}-s_{i,j}c)\hat{e}_{j,j}=\hat{e}_{i,i}(fds_{i,j}-s_{i,j}df)\hat{e}_{j,j}
\]
and
\[
\hat{e}_{i,i}c\hat{e}_{i,j}-\hat{e}_{i,j}c\hat{e}_{j,j}=\hat{e}_{i,i}d\hat{e}_{i,j}-\hat{e}_{i,j}d\hat{e}_{j,j},
\]
i.e.,
\[
c^{i,i}-c^{j,j}=d^{i,i}-d^{j,j}.
\]
The proof is complete.
\end{proof}

Let $D=\sum_{k=1}^m D_{a_k,b_k}$, be a derivations on $\mathcal{J}$,
generated by elements $a_1$, $a_2$, $\dots,$ $a_m$, $b_1$, $b_2$, $\dots,$ $b_m\in M(\Omega,B_{sa}(H))$.
Then, throughout the present section, the element $\frac{1}{4}(\sum_{k=1}^l [a_k,b_k])$ we denote by $a_D$,
i.e., $a_D=\frac{1}{4}(\sum_{k=1}^l [a_k,b_k])$.

Let $\Delta$ be a 2-local spatial derivation on $\mathcal{J}$, $i$, $j$ be arbitrary pairwise distinct natural numbers, and,
let $D=\sum_{k=1}^m D_{a_k,b_k}$ be a derivation on $\mathcal{J}$, generated by elements
$a_1$, $a_2$, $\dots,$ $a_m$, $b_1$, $b_2$, $\dots,$ $b_m\in M(\Omega,B_{sa}(H))$ such that
$$
\Delta (s_{i,j})=\sum_{k=1}^m D_{a_k,b_k}(s_{i,j}).
$$
By Proposition \ref{3.4111} the following elements are well-defined
$$
a_{i,j}=\hat{e}_{i,i}a_D\hat{e}_{j,j}, a^{i,j}\hat{e}_{i,j}=a_D\hat{e}_{i,j},
a^{i,j}=a_D^{i,j}\in M(\Omega),
$$
$$
a_{j,i}=\hat{e}_{j,j}a_D\hat{e}_{i,i}, a^{j,i}\hat{e}_{j,i}=a_D\hat{e}_{j,i},
a^{j,i}=a_D^{j,i}\in M(\Omega).
$$

Fix different indices $i_o$, $j_o$. Let
$x_1$, $x_2$, $\dots,$ $x_l$, $y_1$, $y_2$, $\dots,$ $y_l\in M(\Omega,B_{sa}(H))$ be elements such that
$$
\Delta(s_{i_oj_o})=\sum_{k=1}^l [x_k(y_ks_{i_oj_o})-y_k(x_ks_{i_oj_o})]
$$
and
$$
\Delta(x_o)=\sum_{k=1}^l [x_k(y_kx_o)-y_k(x_kx_o)].
$$
Let $c=a_D$, where $D=\sum_{k=1}^l D_{x_k,y_k}$.
Put $c=\sum_{i,j=1}^\infty c^{i,j}\hat{e}_{i,j}\in M(\Omega,B_{sa}(H))$ and
$\bar{a}=\sum_{i,j=1}^\infty a_{i,j}$, where $a_{i,i}=c^{i,i}\hat{e}_{i,i}$ for each $i$.
In the above notations we have the following propositions.

\medskip

\begin{proposition} \label{4.511}
Let $k$, $l$ be arbitrary different
natural numbers, and let $c_1$, $c_2$, $\dots,$ $c_m$, $d_1$, $d_2$, $\dots,$ $d_m$ be elements in $M(\Omega,B_{sa}(H))$ such that
$$
\Delta(s_{k,l})=\sum_{p=1}^m [c_p(d_ps_{k,l})-d_p(c_ps_{k,l})]
$$
and
$$
\Delta(x_o)=\sum_{p=1}^m [c_p(d_px_o)-d_p(c_px_o)].
$$
Then $c^{k,k}-c^{l,l}=b^{k,k}-b^{l,l}$, where $b=a_D$, $D=\sum_{k=1}^m D_{c_k,d_k}$.
\end{proposition}

\begin{proof}
We may assume that  $k<l$. We have
$$
\Delta(x_o)=cx_o-x_oc=bx_o-x_ob.
$$
Hence
$$
\hat{e}_{k,k}(cx_o-x_oc)\hat{e}_{k+1,k+1}=\hat{e}_{k,k}(bx_o-x_ob)\hat{e}_{k+1,k+1},
$$
$$
\hat{e}_{k,k}(c\lambda_k s_{k,k+1}-\lambda_k s_{k,k+1}c)\hat{e}_{k+1,k+1}=\hat{e}_{k,k}(b\lambda_ks_{k,k+1}-\lambda_ks_{k,k+1}b)\hat{e}_{k+1,k+1},
$$
$$
\lambda_k \hat{e}_{k,k}c\hat{e}_{k,k+1}-\lambda_k \hat{e}_{k,k+1}c\hat{e}_{k+1,k+1}=\lambda_k\hat{e}_{k,k}b\hat{e}_{k,k+1}-\lambda_k\hat{e}_{k,k+1}b\hat{e}_{k+1,k+1},
$$
and
$$
c^{k,k}-c^{k+1,k+1}=b^{k,k}-b^{k+1,k+1}.
$$
Thus, this proposition is proved similarly to the proof of
Proposition \ref{3.6}.
\end{proof}

\begin{proposition}  \label{4.2}
Let $\Delta$ be a 2-local spatial derivation on $\mathcal{J}$ implemented by elements in $M(\Omega,B_{sa}(H))$,
let $i$, $j$, $k$ be arbitrary pairwise distinct indices, and
$D=\sum_{k=1}^l D_{c_k,d_k}$ be the derivation on $\mathcal{J}$, generated by some elements
$c_1$, $c_2$, $\dots,$ $c_l$, $d_1$, $d_2$, $\dots,$ $d_l\in M(\Omega,B_{sa}(H))$ such that
$$
\Delta (s_{i,j})=\sum_{k=1}^l D_{c_k,d_k}(s_{i,j}).
$$
Then the following equalities are valid
$$
{a_D}^{i,j}=a^{i,j}, \,\,\, {a_D}^{j,i}=a^{j,i}, \,\,\,
{a_D}^{i,i}-{a_D}^{j,j}=a^{i,i}-a^{j,j}.
$$
\end{proposition}

\begin{proof}
The proof is similar to the proof of Proposition \ref{3.31}.
\end{proof}

\begin{proposition}  \label{4.21}
Let $\Delta$ be a 2-local spatial derivation on $\mathcal{J}$ implemented by elements in $M(\Omega,B_{sa}(H))$, and,
let $i$, $j$, $k$ be arbitrary pairwise distinct indices, and
$D=\sum_{k=1}^l D_{c_k,d_k}$ be the derivation on $\mathcal{J}$, generated by some elements
$c_1$, $c_2$, $\dots,$ $c_l$, $d_1$, $d_2$, $\dots,$ $d_l\in M(\Omega,B_{sa}(H))$ such that
$$
\Delta (s_{i,j})=\sum_{k=1}^l D_{c_k,d_k}(s_{i,j}).
$$
Then the following equalities are valid relative to the associative multiplication
$$
\hat{e}_{i,i}{a_D}\hat{e}_{k,k}=a_{i,k}, \,\,\, \hat{e}_{k,k}{a_D}\hat{e}_{j,j}=a_{k,j}.
$$
\end{proposition}

\begin{proof}
The proof is similar to the proof of Proposition \ref{3.41}.
\end{proof}

\begin{proposition} \label{4.5}
Let $\Delta$ be a 2-local spatial derivation on $\mathcal{J}$, and,
let $i$, $j$ be arbitrary different indices and let $D=\sum_{k=1}^l D_{c_k,d_k}$ be a derivation on $\mathcal{J}$,
generated by the elements $c_1$, $c_2$, $\dots,$ $c_l$, $d_1$, $d_2$, $\dots,$ $d_l\in M(\Omega,B_{sa}(H))$ such that
$$
\Delta (s_{i,j})=\sum_{k=1}^l D_{c_k,d_k}(s_{i,j}).
$$
Then the following equality is valid
$$
\Delta(s_{i,j})=\left(\sum_{k\neq l} a_{k,l}\right)s_{i,j}-s_{i,j}\left(\sum_{k\neq l} a_{k,l}\right)
$$
$$
+\left({a_D}^{i,i}-{a_D}^{j,j}\right)\hat{e}_{i,j}+\left({a_D}^{j,j}-{a_D}^{i,i}\right)\hat{e}_{j,i}.
$$
\end{proposition}

\begin{proof}
The present proposition is proved similarly to the proof of Proposition \ref{3.5}.
\end{proof}

\begin{proposition}  \label{4.61}
Let $\Delta$ be a 2-local spatial derivation on $\mathcal{J}$, and,
let $i$, $j$ be distinct indices, and, let $D$ be a spatial derivation on $\mathcal{J}$ implemented by elements in $M(\Omega,B_{sa}(H))$ such that
$$
\Delta (s_{i,j})=D(s_{i,j}).
$$
Then
$$
(1-\hat{e}_{i,i})a_{D}\hat{e}_{i,i}=(1-\hat{e}_{i,i})\left(\sum_{k,q=1, k\neq q}^n a_{k,q}\right)\hat{e}_{i,i},
$$
$$
\hat{e}_{j,j}a_{D}(1-\hat{e}_{j,j})=\hat{e}_{j,j}\left(\sum_{k,q=1, k\neq q}^n a_{k,q})(1-\hat{e}_{j,j}\right),
$$
$$
\hat{e}_{i,i}a_{D}(1-\hat{e}_{i,i})=\hat{e}_{i,i}\left(\sum_{k,q=1, k\neq q}^n a_{k,q})(1-\hat{e}_{i,i}\right),
$$
$$
(1-\hat{e}_{j,j})a_{D}\hat{e}_{j,j}=(1-\hat{e}_{j,j})\left(\sum_{k,q=1, k\neq q}^n a_{k,q}\right)\hat{e}_{j,j},
$$
$$
a^{i,i}-a^{j,j}=a_{D}^{i,i}-a_{D}^{j,j},
a^{j,j}-a^{i,i}=a_{D}^{j,j}-a_{D}^{i,i}.
$$
\end{proposition}

\begin{proof}
The present proposition is proved similarly to the proof of Proposition \ref{3.7}.
\end{proof}

\begin{proposition}  \label{4.8}
Let $\Delta$ be an arbitrary 2-local spatial derivation on $\mathcal{J}$.
Let $i$, $j$ be distinct indices, and, let $D$, $\bar{D}$ be spatial derivations on $\mathcal{J}$
implemented by elements in $M(\Omega,B_{sa}(H))$ such that
$$
\Delta (s_{i,j})=D(s_{i,j}), \,\, \Delta (\hat{e}_{i,i})=\bar{D}(\hat{e}_{i,i}).
$$
Then
$$
(1-\hat{e}_{i,i})a_{\bar{D}}\hat{e}_{i,i}=(1-\hat{e}_{i,i})a_D\hat{e}_{i,i}, \,\,\,
\hat{e}_{i,i}a_{\bar{D}}(1-\hat{e}_{i,i})=\hat{e}_{i,i}a_D(1-\hat{e}_{i,i}),   \eqno{(10)}
$$
\end{proposition}

\begin{proof}
The present proposition is proved similarly to the proof of Proposition \ref{3.8}.
\end{proof}

The following theorem is the key result of this section.

\begin{theorem} \label{4.4}
Let $\Omega$ be an arbitrary set, $M(\Omega,B_{sa}(H))$ be the
Jordan algebra of all maps from $\Omega$ to $B_{sa}(H)$. Let
$\mathcal{J}$ be a Jordan subalgebra of $M(\Omega,B_{sa}(H))$ containing the element $x_o$ and the family
$\{\hat{e}_{i,i}\}_{i=1}^\infty\cup\{s_{i,j}\}_{i,j=1,i\neq j}^\infty$. Then any 2-local spatial derivation
on $\mathcal{J}$ implemented by elements in $M(\Omega,B_{sa}(H))$
is a derivation.
\end{theorem}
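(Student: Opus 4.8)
The plan is to show that the globally defined element $\bar{a}\in M(\Omega,B(H))$ implements $\Delta$, i.e. that $\Delta(x)=\bar{a}x-x\bar{a}$ for every $x\in\mathcal{J}$, by faithfully transcribing the finite-dimensional argument of Theorem~\ref{3.11}. Since the orthogonal family $\{\hat{e}_{i,i}\}$ resolves the identity, it suffices to verify that each Peirce component $\hat{e}_{i,i}\Delta(x)\hat{e}_{j,j}$ agrees with $\hat{e}_{i,i}(\bar{a}x-x\bar{a})\hat{e}_{j,j}$. I would treat the off-diagonal blocks ($i\neq j$) and the diagonal blocks ($i=j$) separately.

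For the off-diagonal blocks, fix a pair $i\neq j$ and an arbitrary $x$. By 2-locality choose a spatial derivation $\sum_{k}D_{c_k,d_k}$ agreeing with $\Delta$ on both $s_{i,j}$ and $x$, and set $d(ij)=\frac14(\sum_k[c_k,d_k])$, so that $\Delta(x)=d(ij)x-xd(ij)$ and $\Delta(s_{i,j})=d(ij)s_{i,j}-s_{i,j}d(ij)$. Lemma~\ref{4.5} pins down the structure of $\Delta(s_{i,j})$ and thereby identifies the relevant off-diagonal entries of $d(ij)$ with the entries $a_{i,j}$, $a_{j,i}$ of $\bar{a}$, while Lemma~\ref{4.2} transports the diagonal-difference data. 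Feeding these into the computation of $\hat{e}_{j,j}\Delta(x)\hat{e}_{i,i}=\hat{e}_{j,j}(d(ij)x-xd(ij))\hat{e}_{i,i}$ and reorganizing the Peirce pieces, exactly as in the proof of Theorem~\ref{3.11}, yields $\hat{e}_{j,j}\Delta(x)\hat{e}_{i,i}=\hat{e}_{j,j}(\bar{a}x-x\bar{a})\hat{e}_{i,i}$, and symmetrically for the $(i,j)$ block.

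For the diagonal blocks I would lean on Lemma~\ref{3.4111}. Given an index $i$, compress by a finite idempotent $e=e_{i_1,i_1}+\dots+e_{i_n,i_n}$ containing $i$: then $\Delta_e(x)=e\Delta(x)e$ is a genuine derivation on $eM(\Omega,B_{sa}(H))e\cong M_n(\mathbb{F})\otimes M(\Omega)$, implemented by some $d$, and the compatibility clause of Lemma~\ref{3.4111} forces $\hat{e}_{i,i}d\hat{e}_{j,j}=\hat{e}_{i,i}c\hat{e}_{j,j}$ together with $d^{i,i}-d^{j,j}=c^{i,i}-c^{j,j}$ relative to the fixed reference element $c$. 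The role of $x_o$ is precisely to anchor all these local implementing elements to a common reference so that overlapping compressions agree; Lemma~\ref{4.51} supplies the matching of diagonal differences $c^{k,k}-c^{l,l}$ needed for this. Extracting the $(i,i)$-component from $\Delta_e(x)=dx-xd$ then gives $\hat{e}_{i,i}\Delta(x)\hat{e}_{i,i}=\hat{e}_{i,i}(\bar{a}x-x\bar{a})\hat{e}_{i,i}$, once more by the diagonal computation of Theorem~\ref{3.11}.

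Assembling the off-diagonal and diagonal identities over all $i,j$ reconstructs $\Delta(x)=\bar{a}x-x\bar{a}$, whence $\Delta$ is the spatial derivation implemented by $\bar{a}$. The main obstacle I anticipate is not the algebra, which is a transcription of the finite case, but the infinite-dimensional bookkeeping: one must ensure that $\bar{a}$ is a well-defined element of $M(\Omega,B(H))$ and that $\bar{a}x-x\bar{a}$ is meaningful on all of $\mathcal{J}$, which is where the summability $\sum_n\lambda_n\lambda_n^*<\infty$ defining $x_o$ enters, and, more delicately, that the implementing elements produced 2-locally for different pairs $(i,j)$ and for different finite compressions all reduce to the single fixed $\bar{a}$ on the relevant Peirce components. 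The consistency Lemmas~\ref{3.4111} and \ref{4.51}, both hinging on the common anchor $x_o$, are exactly what make this globalization go through, and verifying that each locally chosen $d(ij)$ and each compression $d$ truly agrees with $\bar{a}$ is the crux of the argument.
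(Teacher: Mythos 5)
Your overall route is the paper's own: define $\bar{a}$ from the anchor pair $(s_{i_o,j_o},x_o)$, verify the identity $\Delta(x)=\bar{a}x-x\bar{a}$ Peirce-component by Peirce-component by transcribing the computations of Theorem \ref{3.11} (with Lemmas \ref{4.2}, \ref{4.5} and the $x_o$-anchoring lemma, the analogue of Lemma \ref{3.6}, in place of their finite-dimensional counterparts), and conclude by the separation property of the family $\{\hat{e}_{k,k}\}$. Your off-diagonal paragraph matches the paper exactly.

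The one place where your sketch deviates, and where it would fail as literally written, is the diagonal block. You propose to obtain $\hat{e}_{i,i}\Delta(x)\hat{e}_{i,i}$ by compressing to a finite idempotent $e$ and ``extracting the $(i,i)$-component from $\Delta_e(x)=dx-xd$.'' But $\Delta_e$ is a derivation only on the corner $eM(\Omega,B_{sa}(H))e$, and a general $x\in\mathcal{J}$ lies in no finite corner; worse, the target identity $\hat{e}_{i,i}\Delta(x)\hat{e}_{i,i}=\hat{e}_{i,i}(\bar{a}x-x\bar{a})\hat{e}_{i,i}$ involves the products $a_{i,k}\,\hat{e}_{k,k}x\hat{e}_{i,i}$ for \emph{all} $k\neq i$, which no finite compression can see (note also that $e\Delta(x)e$ and $\Delta_e(exe)$ differ by the cross terms $ed(1-e)xe-ex(1-e)de$). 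In the paper, the compression device of Lemma \ref{3.4111} serves only to establish well-definedness of the entries $a_{i,j}$, $a^{i,j}$, i.e.\ consistency of implementing elements evaluated at the distinguished elements $\hat{e}_{i,i}$, $s_{i,j}$, which do live in corners; the diagonal block for a general $x$ is instead obtained, as in Theorem \ref{3.11}, by invoking 2-locality directly at the pair $(\hat{e}_{i,i},x)$ to get $d(ii)$ with $\Delta(x)=d(ii)x-xd(ii)$, then using auxiliary agreements at the pairs $(\hat{e}_{i,i},s_{i,j})$ to identify $(1-\hat{e}_{i,i})d(ii)\hat{e}_{i,i}$ and $\hat{e}_{i,i}d(ii)(1-\hat{e}_{i,i})$ with the corresponding rows and columns of $\bar{a}$, and the anchor $x_o$ for the diagonal entry $c^{i,i}$. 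Since you yourself fall back on ``the diagonal computation of Theorem \ref{3.11},'' the repair is simply to carry that computation out verbatim and drop the compression for this step. Your closing worry about whether $\bar{a}=\sum_{i,j}a_{i,j}$ is a bona fide element of $M(\Omega,B(H))$ is legitimate, but the paper's proof is equally silent on it; what the argument really delivers, and all the theorem's conclusion needs, are the componentwise identities, each of which involves only rows and columns of $\bar{a}$ that coincide with rows and columns of genuine implementing elements.
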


\begin{proof}
We prove that each 2-local inner derivation $\Delta$ on $\mathcal{J}$ satisfies
the condition
$$
\Delta(x)=\bar{a}x-x\bar{a}, x\in \mathcal{J}
$$
for the element $\bar{a}\in M(\Omega,B_{sa}(H))$ defined above.
Let $i$, $j$ be arbitrary distinct natural numbers. The proofs of the equalities
$$
\hat{e}_{j,j}\Delta(x)\hat{e}_{i,i}=\hat{e}_{j,j}(\bar{a}x-x\bar{a})\hat{e}_{i,i},
$$
$$
\hat{e}_{i,i}\Delta(x)\hat{e}_{i,i}=\hat{e}_{i,i}(\bar{a}x-x\bar{a})\hat{e}_{i,i}
$$
are the same as the proofs of the appropriate equalities in the proof of Theorem \ref{3.11}.
If, for arbitrary elements $y$, $z$ in $M(\Omega,B_{sa}(H))$,
$$
\hat{e}_{k,k}y\hat{e}_{l,l}=\hat{e}_{k,k}z\hat{e}_{l,l}
$$
for each pair $k$, $l$ of natural numbers,
then $y=z$. Hence,
$$
\Delta(x)=\bar{a}x-x\bar{a}\in M(\Omega,B_{sa}(H)).
$$
But, $\Delta(x)\in \mathcal{J}$, since $x\in \mathcal{J}$. Therefore
$$
\Delta(x)=\bar{a}x-x\bar{a}\in \mathcal{J}
$$
for any element $x\in \mathcal{J}$. So, $\Delta$ is a derivation
on $\mathcal{J}$. The proof is complete.
\end{proof}

In particular, Theorem \ref{4.4} implies the following theorem.

\begin{theorem} \label{4.51}
Let $\Omega$ be an arbitrary set, $M(\Omega,B_{sa}(H))$ be the
Jordan algebra of all maps of $\Omega$ to $B_{sa}(H)$.
Then any 2-local inner derivation on the Jordan algebra $M(\Omega,B_{sa}(H))$ is a derivation.
\end{theorem}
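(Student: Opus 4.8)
Theorem \ref{4.51} follows by exhibiting it as the special case $\mathcal{J}=M(\Omega,B_{sa}(H))$ of Theorem \ref{4.4}. The plan is therefore to verify that the ambient Jordan algebra $M(\Omega,B_{sa}(H))$ satisfies all the hypotheses required of the subalgebra $\mathcal{J}$ in Theorem \ref{4.4}, and that a 2-local inner derivation on $M(\Omega,B_{sa}(H))$ is precisely a 2-local spatial derivation implemented by elements of $M(\Omega,B_{sa}(H))$ itself.

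\begin{proof}
We apply Theorem \ref{4.4} with $\mathcal{J}=M(\Omega,B_{sa}(H))$. First I would check the structural hypotheses. The element $x_o=\sum_{n=1}^\infty\lambda_n s_{n,n+1}$ lies in $M(\Omega,B_{sa}(H))$ by construction, since the summability condition $\sum_{n=1}^\infty\lambda_n\lambda_n^*<\infty$ guarantees that $x_o$ defines a bounded self-adjoint operator at each point of $\Omega$; and the diagonal matrix units $\hat{e}_{i,i}$ together with the symmetrized units $s_{i,j}=\hat{e}_{i,j}+\hat{e}_{j,i}$ are manifestly self-adjoint, hence the family $\{\hat{e}_{i,i}\}_{i=1}^\infty\cup\{s_{i,j}\}_{i,j=1,\,i\neq j}^\infty$ is contained in $M(\Omega,B_{sa}(H))$. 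Thus the full Jordan algebra is itself a Jordan subalgebra of $M(\Omega,B_{sa}(H))$ containing $x_o$ and the required family, so it qualifies as an admissible $\mathcal{J}$.

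Next I would identify the two notions of 2-local map. A 2-local inner derivation on $M(\Omega,B_{sa}(H))$ is by definition one for which, given any pair $x,y$, there is an inner derivation $D$ of the Jordan algebra with $\Delta(x)=D(x)$ and $\Delta(y)=D(y)$; and an inner derivation here has the form $\sum_{k=1}^m D_{a_k,b_k}$ with $a_k,b_k$ drawn from the algebra itself. But a spatial derivation implemented by elements of $M(\Omega,B_{sa}(H))$ is, by the Definition preceding Lemma \ref{3.4111}, exactly a map of the form $\sum_{k=1}^m D_{a_k,b_k}(x)=\sum_{k=1}^m\bigl(a_k\cdot(b_k\cdot x)-b_k\cdot(a_k\cdot x)\bigr)$ with $a_k,b_k\in M(\Omega,B_{sa}(H))$. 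Since the implementing algebra and the subalgebra now coincide, these two classes of inner maps agree, and a 2-local inner derivation on $M(\Omega,B_{sa}(H))$ is precisely a 2-local spatial derivation on $\mathcal{J}=M(\Omega,B_{sa}(H))$ implemented by elements of $M(\Omega,B_{sa}(H))$.

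With both hypotheses verified, Theorem \ref{4.4} applies verbatim and yields an element $\bar{a}\in M(\Omega,B_{sa}(H))$ with $\Delta(x)=\bar{a}x-x\bar{a}$ for all $x$, so $\Delta$ is a derivation. The only point that deserves care---and which I expect to be the sole real obstacle---is confirming that the distinguished element $x_o$ actually belongs to $M(\Omega,B_{sa}(H))$ and that the associated matrix units generate enough of the algebra for the Peirce-type arguments of Theorem \ref{4.4} to reach every entry $\hat{e}_{k,k}\,(\cdot)\,\hat{e}_{l,l}$; once the containment of $x_o$ and the full system of matrix units is granted, everything else is a direct specialization. The proof is complete.
\end{proof}
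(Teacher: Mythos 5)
Your proposal is correct and takes essentially the same route as the paper, which derives this theorem from Theorem 4.4 with the single remark ``In particular, Theorem 4.4 implies the following theorem,'' i.e.\ by the specialization $\mathcal{J}=M(\Omega,B_{sa}(H))$ that you spell out. Your explicit checks --- that $x_o$ and the family $\{\hat{e}_{i,i}\}_{i=1}^\infty\cup\{s_{i,j}\}_{i,j=1,\,i\neq j}^\infty$ lie in $M(\Omega,B_{sa}(H))$, and that spatial derivations implemented by elements of the ambient algebra coincide with inner derivations once the subalgebra equals the ambient algebra --- are exactly the implicit content of that one-line deduction, so nothing is missing.
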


Let $\Omega$ be a hyperstonean compact, $C(\Omega)$
denotes the algebra of all ${\mathbb C}$-valued (${\mathbb R}$-valued) continuous maps on $\Omega$.
There exists a subalgebra $\mathcal{N}$ in $M(\Omega,B_{sa}(H))$ which is a (real) von Neumann algebra
with the center isomorphic to $C(\Omega)$ (see \cite[Page 12]{AFN3}). More precisely
$\mathcal{N}$ is a (real) von Neumann algebra of type I.

The vector space
$$
\mathcal{N}_{sa}=\{a\in \mathcal{N}: a^*=a\}
$$
of all self-adjoint elements in $\mathcal{N}$
is a Jordan algebra with respect to Jordan multiplication
$$
a\cdot b=\frac{1}{2}(ab+ba), a,b\in \mathcal{N}_{sa}.
$$
The Jordan algebra $\mathcal{N}_{sa}$ is a Jordan subalgebra of $M(\Omega,B_{sa}(H))$
containing the element $x_o$ and the family $\{\hat{e}_{i,i}\}_{i=1}^\infty\cup\{s_{i,j}\}_{i,j=1,i\neq j}^\infty$.
Hence, by Theorem \ref{4.4}, we have the following statement.

\begin{theorem} \label{4.52}
Every 2-local spatial derivation
on $\mathcal{N}_{sa}$ implemented by elements in $M(\Omega,B_{sa}(H))$
is a derivation.
\end{theorem}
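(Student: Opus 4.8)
The plan is to obtain this statement as an immediate corollary of Theorem~\ref{4.4}, whose hypotheses concern an abstract Jordan subalgebra $\mathcal{J}$ of $M(\Omega,B_{sa}(H))$. All the genuine work on 2-local derivations is already contained in that theorem, so what remains is purely structural: I must check that $\mathcal{N}_{sa}$ qualifies as such a $\mathcal{J}$, i.e. that it is a Jordan subalgebra of $M(\Omega,B_{sa}(H))$ containing the element $x_o$ together with the whole family $\{\hat{e}_{i,i}\}_{i=1}^\infty\cup\{s_{i,j}\}_{i,j=1,i\neq j}^\infty$. Granting this, every 2-local spatial derivation on $\mathcal{N}_{sa}$ implemented by elements of $M(\Omega,B_{sa}(H))$ is a derivation by Theorem~\ref{4.4}, and the proof is finished.

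First I would use the structure theory of type I von Neumann algebras to realize $\mathcal{N}$ concretely inside $M(\Omega,B(H))$. Since $\mathcal{N}$ is a (real) von Neumann algebra of type I whose center is isomorphic to $C(\Omega)$, the representation described in \cite[Page 12]{AFN3} lets us take the fixed matrix units $\{e_{i,j}\}_{i,j=1}^\infty$ inside $\mathcal{N}$ and the center generated by the scalar-valued maps. Under this identification the elements $\hat{e}_{i,j}={\bf 1}e_{i,j}$ belong to $\mathcal{N}$, and consequently the self-adjoint combinations $\hat{e}_{i,i}$ and $s_{i,j}=\hat{e}_{i,j}+\hat{e}_{j,i}$ lie in $\mathcal{N}_{sa}$.

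Next I would confirm that $x_o=\sum_{n=1}^\infty\lambda_n s_{n,n+1}$ belongs to $\mathcal{N}_{sa}$. Each partial sum is a finite self-adjoint combination of the $s_{n,n+1}$, hence lies in $\mathcal{N}_{sa}$; since the condition $\sum_{n=1}^\infty\lambda_n\lambda_n^*<\infty$ guarantees convergence to an element of $M(\Omega,B_{sa}(H))$, and a von Neumann algebra is closed in the relevant operator topology, the limit $x_o$ again lies in $\mathcal{N}$, and being self-adjoint it lies in $\mathcal{N}_{sa}$. Together with the evident fact that $\mathcal{N}_{sa}$ is stable under the Jordan product $a\cdot b=\frac{1}{2}(ab+ba)$ inherited from $M(\Omega,B_{sa}(H))$, this shows that $\mathcal{N}_{sa}$ meets every hypothesis of Theorem~\ref{4.4}.

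The hard part is not the derivation argument, which is entirely absorbed into Theorem~\ref{4.4}, but the structural identification: one must be sure that the abstract type I algebra $\mathcal{N}$ with center $C(\Omega)$ really embeds in $M(\Omega,B_{sa}(H))$ compatibly with the fixed matrix units and so as to contain $x_o$. This is precisely the content quoted from \cite[Page 12]{AFN3}, and it is where the hyperstonean hypothesis on $\Omega$ enters. Once that embedding is in hand, applying Theorem~\ref{4.4} with $\mathcal{J}=\mathcal{N}_{sa}$ yields the conclusion directly.
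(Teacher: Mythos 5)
Your proposal is correct and follows exactly the route the paper takes: the paper's entire proof consists of observing that $\mathcal{N}_{sa}$ is a Jordan subalgebra of $M(\Omega,B_{sa}(H))$ containing $x_o$ and the family $\{\hat{e}_{i,i}\}_{i=1}^\infty\cup\{s_{i,j}\}_{i,j=1,i\neq j}^\infty$ (via the realization of $\mathcal{N}$ from \cite[Page 12]{AFN3}) and then invoking Theorem~\ref{4.4}. Your write-up merely spells out the containment verification (matrix units, convergence of $x_o$, closure of $\mathcal{N}$) in more detail than the paper, which simply asserts it.
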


Note that, if ${\mathbb F}={\mathbb C}$, then Theorem \ref{4.52} follows by Theorem 3.4 in \cite{AKNA} and
the theorem in \cite{AA1}.

Let $\mathcal{K}(H)$ be the C$^*$-algebra of all compact
operators on the Hilbert space $H$, $\mathcal{K}_{sa}(H)$ be
the Jordan algebra of all self-adjoint compact operators on the Hilbert space $H$.
Let $\Omega$ be a topological space. Then
the vector space $C(\Omega,\mathcal{K}_{sa}(H))$ of all continuous maps
from $\Omega$ to $\mathcal{K}_{sa}(H)$ is a Jordan subalgebra of $M(\Omega,B_{sa}(H))$
containing the element $x_o$ and the family $\{\hat{e}_{i,i}\}_{i=1}^\infty\cup\{s_{i,j}\}_{i,j=1,i\neq j}^\infty$.
Therefore by Theorem \ref{4.4} we have the following theorem.

\begin{theorem} \label{4.6}
Every 2-local spatial derivation
on the Jordan algebra $C(\Omega,\mathcal{K}_{sa}(H))$ implemented by elements in $M(\Omega,B_{sa}(H))$
is a derivation.
\end{theorem}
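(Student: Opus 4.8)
The plan is to obtain the statement as an immediate application of Theorem~\ref{4.4}, since that theorem already establishes the conclusion for \emph{every} Jordan subalgebra of $M(\Omega,B_{sa}(H))$ that contains the distinguished element $x_o$ together with the family $\{\hat{e}_{i,i}\}_{i=1}^\infty\cup\{s_{i,j}\}_{i,j=1,i\neq j}^\infty$. Thus the whole task reduces to checking that $C(\Omega,\mathcal{K}_{sa}(H))$ meets the hypotheses of Theorem~\ref{4.4}; once this is verified, the conclusion follows word for word.

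First I would verify that $C(\Omega,\mathcal{K}_{sa}(H))$ is a Jordan subalgebra of $M(\Omega,B_{sa}(H))$. Since $\mathcal{K}_{sa}(H)\subseteq B_{sa}(H)$, every continuous map $\Omega\to\mathcal{K}_{sa}(H)$ is in particular a map $\Omega\to B_{sa}(H)$, so $C(\Omega,\mathcal{K}_{sa}(H))\subseteq M(\Omega,B_{sa}(H))$, and closure under addition is clear. For closure under the Jordan product I would use that $\mathcal{K}(H)$ is a two-sided ideal of $B(H)$: if $a,b$ take values in $\mathcal{K}_{sa}(H)$, then $a(\omega)b(\omega)$ and $b(\omega)a(\omega)$ are compact, whence $(a\cdot b)(\omega)=\frac{1}{2}(a(\omega)b(\omega)+b(\omega)a(\omega))$ is compact and self-adjoint. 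Norm-continuity of $\omega\mapsto (a\cdot b)(\omega)$ follows from the joint norm-continuity of multiplication on norm-bounded sets applied locally near each point, using that a continuous map is locally bounded.

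Next I would check the two membership requirements. The elements $\hat{e}_{i,i}$ and $s_{i,j}=\hat{e}_{i,j}+\hat{e}_{j,i}$ are constant maps whose values $e_{i,i}$ and $e_{i,j}+e_{j,i}$ are finite-rank, hence compact, self-adjoint operators; constant maps are continuous, so the whole family lies in $C(\Omega,\mathcal{K}_{sa}(H))$. For $x_o=\sum_{n=1}^\infty\lambda_n s_{n,n+1}$, the pointwise value is the constant operator $\sum_n\lambda_n(e_{n,n+1}+e_{n+1,n})$; the hypothesis $\sum_n\lambda_n\lambda_n^*<\infty$ says precisely that this operator is Hilbert--Schmidt (its Hilbert--Schmidt norm squared equals $2\sum_n\lambda_n\lambda_n^*$), hence compact, and it is self-adjoint by construction, so the constant map $x_o$ belongs to $C(\Omega,\mathcal{K}_{sa}(H))$.

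With the hypotheses verified, Theorem~\ref{4.4} applies directly and yields that every 2-local spatial derivation on $C(\Omega,\mathcal{K}_{sa}(H))$ implemented by elements of $M(\Omega,B_{sa}(H))$ is a derivation. The only points requiring genuine care---and hence the ``main obstacle,'' modest as it is---are the two analytic verifications: that the Jordan product of compact-operator-valued maps remains compact-valued (the ideal argument) and that the infinite series defining $x_o$ converges to a compact self-adjoint operator (the Hilbert--Schmidt estimate). Everything else is the structural observation that Theorem~\ref{4.4} was designed for exactly this class of subalgebras.
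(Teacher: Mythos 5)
Your proposal is correct and follows exactly the paper's route: the paper proves Theorem~\ref{4.6} by observing that $C(\Omega,\mathcal{K}_{sa}(H))$ is a Jordan subalgebra of $M(\Omega,B_{sa}(H))$ containing $x_o$ and the family $\{\hat{e}_{i,i}\}_{i=1}^\infty\cup\{s_{i,j}\}_{i,j=1,i\neq j}^\infty$, and then invoking Theorem~\ref{4.4}. You merely spell out the verifications (the ideal property of $\mathcal{K}(H)$ for closure under the Jordan product, and the Hilbert--Schmidt estimate showing $x_o$ is compact-valued) that the paper leaves implicit, which is a welcome addition but not a different argument.
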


Note that, if $\Omega$ is a compact Hausdorff space and ${\mathbb F}={\mathbb C}$, then Theorem \ref{4.6}
follows by Theorem 2.12 in \cite{JP17}. Also, if $\Omega$ is a locally compact Hausdorff space and ${\mathbb F}={\mathbb C}$, then Theorem \ref{4.6}
follows by Theorem 2.12 in \cite{JP17} and Theorem 2.8 in \cite{JP17b}.

\section{Local derivations on Jordan algebras of self-adjoint operator-valued maps}

This section is devoted to derivations and local derivations of Jordan algebras.

Let $\mathcal{A}$ be an algebra (nonassociative in general).
A linear map $\Delta : \mathcal{A}\to \mathcal{A}$ is called a local derivation, if for
any element $x\in \mathcal{A}$ there exists a derivation
$D_{x}:\mathcal{A}\to \mathcal{A}$ such that $\Delta (x)=D_{x}(x)$.

Let $\mathcal{A}$ be an associative algebra. A derivation $D$ on $\mathcal{A}$
is called an inner derivation, if there exists an element $a\in
\mathcal{A}$ such that
$$
D(x)=ax-xa, x\in \mathcal{A}.
$$
A linear map $\Delta : \mathcal{A}\to \mathcal{A}$ is called a local inner derivation,
if for any element $x\in \mathcal{A}$ there exists an element
$a\in \mathcal{A}$ such that $\Delta (x)=ax-xa$.

Let $\Delta$ be a local derivation on $\mathcal{J}$.
$\Delta$ is called a local inner derivation, if for each element
$x\in \mathcal{J}$ there is an inner derivation $D$ on $\mathcal{J}$ such that $\Delta(x)=D(x)$.

Let $\mathcal{A}$ be an associative unital algebra over an arbitrary field.
Then the vector space $\mathcal{A}$ with respect to the operation of Jordan multiplication
$$
a\cdot b=\frac{1}{2}(ab+ba), a, b\in \mathcal{A}
$$
is a Jordan algebra. This Jordan algebra will be denoted by $(\mathcal{A}, \cdot)$.

Let $\Delta$  be a local inner Jordan derivation of $(\mathcal{A},\cdot)$. Then for each element $x\in \mathcal{A}$
there is an inner derivation $D$ of $(\mathcal{A},\cdot)$ such that $\Delta(x)=D(x)$ and
$$
D=\sum_{k=1}^m D_{a_k,b_k}=D_{\frac{1}{4}(\sum_{k=1}^l [a_k,b_k])}
$$
for some elements
$a_1$, $a_2$, $\dots,$ $a_m$, $b_1$, $b_2$, $\dots,$ $b_m$ in $\mathcal{A}$.
Hence, $D$ is also an inner derivation of $\mathcal{A}$. So, $\Delta$ is a local inner derivation
of $\mathcal{A}$. Thus, every local inner derivation of the Jordan algebra $(\mathcal{A},\cdot)$
is a local inner derivation of the associative algebra $\mathcal{A}$.
Therefore, if every local inner derivation of the associative algebra $\mathcal{A}$ is a derivation,
then every local inner derivation of the Jordan algebra $(\mathcal{A},\cdot)$ is also a derivation.
If every Jordan derivation on an associative algebra $\mathcal{A}$ is an associative derivation, then
the converse of the last statement is also true, i.e., if every local inner derivation of
the Jordan algebra $(\mathcal{A},\cdot)$ is a derivation,
then every local inner derivation of the associative algebra $\mathcal{A}$ is also a derivation.

$H$ be an arbitrary Hilbert space over ${\mathbb F}={\mathbb R}$, ${\mathbb C}$, and, let $B(H)$ be the algebra of
all bounded linear operators on $H$. Let $\Omega$ be an arbitrary set, $M(\Omega,B_{sa}(H))$ be the
Jordan algebra of all maps from $\Omega$ to $B_{sa}(H)$.

In this section we describe local derivations on some class of subalgebras of $M(\Omega,B_{sa}(H))$.

Given $t\in \Omega$, $\phi_t : M(\Omega,B(H))\to B(H)$ will denote the
$*$-homomorphism defined by $\phi_t(x) = x(t)$, $x\in M(\Omega,B(H))$. The space $M(\Omega,B(H))$ is an $B(H)$-bimodule
with products $(ax)(t)=ax(t)$ and $(xa)(t)=x(t)a$, for every $a\in B(H)$,
$x\in M(\Omega,B(H))$. The map $\phi_t : M(\Omega,B(H))\to B(H)$ is an $B(H)$-module homomorphism.

Given an arbitrary set $\Omega$, the $*$-homomorphism which maps each element $a$ in $B(H)$ to the constant function
$\Omega\to \{a\}$ will be denoted by $\hat{a}$. The map
$$
\phi(a)=\hat{a}, a\in B(H),
$$
is an $B(H)$-module homomorphism of $B(H)$ to $M(\Omega,B(H))$.

\begin{definition}
Let $A$ be a Jordan algebra and, let $B$ be a Jordan subalgebra of $A$.
A local derivation $\nabla$ on $B$ is called local spatial derivation
with respect to the derivations implemented by elements in $A$, if
for every element $x\in B$ there exists a spatial derivation $D$
on $B$ implemented by elements in $A$ such that $\Delta(x)=D(x)$.
\end{definition}

Let $\mathcal{A}=B_{sa}(H)$, $\mathcal{K}_{sa}(H)$, and let $\Omega$ be a topological space.
Let $M(\Omega,\mathcal{A})$ be the Jordan algebra of all maps from $\Omega$ to $\mathcal{A}$, and let
$C(\Omega,\mathcal{A})$ be the Jordan algebra of all continuous maps from $\Omega$ to $\mathcal{A}$.
Then the following theorem takes place.

\begin{theorem} \label{5.1}
Let $\mathcal{J}$ be one of the Jordan algebras $M(\Omega,\mathcal{A})$, $C(\Omega,\mathcal{A})$
and let $\nabla$ be a local spatial derivation on $\mathcal{J}$ with respect to the derivations
implemented by elements in $M(\Omega,B_{sa}(H))$. Then $\nabla$ is a derivation.
\end{theorem}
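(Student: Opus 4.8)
The plan is to construct a single element $\bar a\in M(\Omega,B_{sa}(H))$ such that $\nabla(x)=\bar a x-x\bar a$ for all $x\in\mathcal J$; once this is done $\nabla$ is manifestly a spatial derivation, and $\nabla(x)\in\mathcal J$ because $\nabla$ maps $\mathcal J$ into itself. Throughout I use two features of the hypothesis. First, a local derivation is \emph{linear} by definition. Second, every spatial value is an associative commutator: for each $x$ there is a skew-adjoint $c_x=\tfrac14\sum_k[a_k,b_k]\in M(\Omega,B(H))$ with $\nabla(x)=c_x x-x c_x$, exactly as recorded in the Preliminaries (here $[a_k,b_k]^*=-[a_k,b_k]$ since $a_k,b_k$ are self-adjoint).

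First I would pass to finite corners. For a finite index family put $e=\hat e_{i_1,i_1}+\dots+\hat e_{i_n,i_n}$ and set $\nabla_e(x)=e\nabla(x)e$ for $x\in e\mathcal Je$. Since $x=exe$ gives $(1-e)x=x(1-e)=0$, one computes $\nabla_e(x)=(ec_xe)x-x(ec_xe)$, so $\nabla_e$ is a linear local inner derivation on $e\mathcal Je\cong H_n(M(\Omega))$ (respectively $H_n(C(\Omega))$), the Jordan ring of self-adjoint matrices over a commutative involutive algebra. By the finite-dimensional local-derivation theory for such matrix Jordan algebras (Theorem~\ref{1.1} together with the corresponding associative matrix result, cf.\ \cite{AKP}), each $\nabla_e$ is an inner derivation, implemented by a skew-adjoint $d_e\in eM(\Omega,B(H))e$: $\nabla_e(x)=d_e x-x d_e$.

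Next I would glue the family $\{d_e\}$ into one element. Compressing the identities $\nabla_f(\hat e_{i,i})=f\nabla_e(\hat e_{i,i})f$ and $\nabla_f(s_{i,j})=f\nabla_e(s_{i,j})f$ for $f\le e$, as in Lemma~\ref{3.4111}, shows that the off-diagonal blocks $\hat e_{i,i}d_e\hat e_{j,j}$ and the diagonal differences $d_e^{\,i,i}-d_e^{\,j,j}$ are independent of $e$; the element $x_o=\sum_n\lambda_n s_{n,n+1}$, whose entries chain consecutive indices, propagates the equality of diagonal differences to every pair $(i,j)$ exactly as in Lemmas~\ref{4.5} and \ref{3.6}. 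This yields a single skew-adjoint $\bar a\in M(\Omega,B_{sa}(H))$ with $\hat e_{i,i}\bar a\hat e_{j,j}=\hat e_{i,i}d_e\hat e_{j,j}$ for $i\neq j$ and $\bar a^{\,i,i}-\bar a^{\,j,j}=d_e^{\,i,i}-d_e^{\,j,j}$, the analogue of the element built just before Theorem~\ref{4.4}. By construction $\bar a$ satisfies the generator identities $\nabla(s_{i,j})=\bar a s_{i,j}-s_{i,j}\bar a$ and $\nabla(\hat e_{i,i})=\bar a\hat e_{i,i}-\hat e_{i,i}\bar a$.

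Finally I would verify $\nabla(x)=\bar a x-x\bar a$ for arbitrary $x$ by comparing Peirce components, since the compressions $\hat e_{k,k}(\cdot)\hat e_{l,l}$ determine an element of $M(\Omega,B_{sa}(H))$ (the principle closing Theorem~\ref{4.4}). Here lies the main obstacle: in the $2$-local setting of Theorem~\ref{4.4} one may take a single spatial derivation matching $\nabla$ \emph{simultaneously} at $x$ and at the matrix units, which forces the relevant off-diagonal and diagonal-difference entries of the implementing element to equal those of $\bar a$ and closes the computation. A merely local $\nabla$ supplies an implementing $c_x$ only at $x$, and $[c_x,x]$ is independent of the choice of $c_x$, so the value at $x$ alone cannot force $\nabla(x)=[\bar a,x]$. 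The device replacing simultaneous matching is linearity: evaluating the local property at the sums $x+s_{i,j}$ and $x+\hat e_{i,i}$ and subtracting the generator identities above produces commutator relations coupling $\nabla(x)$ to $\bar a$ in each Peirce slot, which, combined with the finite-corner derivations $D_{d_e}$ and the global linking of indices by $x_o$, pin down every $\hat e_{k,k}\nabla(x)\hat e_{l,l}$. The delicate part is carrying out this bookkeeping while keeping the a priori infinite Peirce expansions confined to the corner-stable, well-defined quantities, given that $\nabla$ is linear but not assumed continuous; once this is achieved one obtains $\nabla(x)=\bar a x-x\bar a\in\mathcal J$, so $\nabla$ is a derivation.
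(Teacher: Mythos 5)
Your proposal is not a proof: the decisive step is missing, and you concede as much in your own final paragraph. Writing out your substitute for 2-local matching, linearity at $x+s_{i,j}$ gives only this: there is some implementing element $c$ (depending on $x+s_{i,j}$) with $\nabla(x)+\nabla(s_{i,j})=[c,x]+[c,s_{i,j}]$, hence, using $\nabla(x)=[c_x,x]$ and the generator identity $\nabla(s_{i,j})=[\bar a,s_{i,j}]$, the single relation $[c_x-c,\,x]=[c-\bar a,\,s_{i,j}]$ --- one equation coupling two unknown, element-dependent implementing operators, varying with $(i,j)$. Nothing in it determines the Peirce components $\hat e_{k,k}\nabla(x)\hat e_{l,l}$, and the corner elements $d_e$ cannot help, since they were extracted only from values of $\nabla$ on the corner algebra, never at $x$; the phrase ``once this is achieved'' is precisely the content of Theorem \ref{5.1}. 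There is a second, independent gap: your corner step invokes a ``finite-dimensional local-derivation theory'' for $H_n(M(\Omega))$ that neither the paper nor your argument supplies. Theorems \ref{3.3} and \ref{3.11} are \emph{2-local} results; Theorem \ref{1.1} merely transfers a local statement from an associative algebra $\mathcal A$ to the full Jordan algebra $(\mathcal A,\cdot)$ and says nothing about the hermitian part $H_n(\Re)$ over a commutative involutive ring, and the needed associative local statement over $M(\Omega)$ or $C(\Omega)$ is not proved or properly cited either.

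The paper's actual proof avoids Peirce bookkeeping entirely, and it explains why the theorem is stated only for $\mathcal J=M(\Omega,\mathcal A)$ and $C(\Omega,\mathcal A)$: these algebras contain all constant functions $\hat a$, which your route never uses. One compresses $\nabla$ pointwise: $\phi_t\nabla\phi$ is a linear local derivation on $\mathcal A$ ($=B_{sa}(H)$ or $\mathcal K_{sa}(H)$); each locally implementing derivation $\phi_t D_{\hat a}\phi$ extends weak-continuously to a derivation of $B_{sa}(H)$, which is inner by \cite{U}, so $\phi_t\nabla\phi$ is a local spatial derivation and hence, by \cite[Theorem 5.4]{AKP}, itself a spatial derivation implemented by some $a_t\in B_{sa}(H)$. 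Setting $\bar a(t)=a_t$ closes the argument: for arbitrary $\mathbf x\in\mathcal J$ the element $\mathbf x-\widehat{\mathbf x(t)}$ vanishes at $t$ and every spatial derivation annihilates it at $t$, so linearity gives $\nabla(\mathbf x)(t)=\nabla(\widehat{\mathbf x(t)})(t)=a_t\mathbf x(t)-\mathbf x(t)a_t$, i.e.\ $\nabla(\mathbf x)=\bar a\mathbf x-\mathbf x\bar a$. In short, the paper outsources exactly the difficulty you ran into to the known theorem that local derivations on these Jordan algebras of operators are derivations; to salvage your matrix-unit route you would in effect have to reprove that theorem, which is the hard part.
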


\begin{proof}
We can prove that the map $\phi_t\nabla \phi:\mathcal{A}\to \mathcal{A}$ is a local inner derivation for every $t\in \Omega$.
Indeed, for any $a$, $b\in \mathcal{A}$, we have
\[
\phi_t\nabla \phi(a+b)=\phi_t\nabla(\hat{a}+\hat{b})
\]
\[
=\phi_t(\nabla(\hat{a})+\nabla(\hat{b}))=(\nabla(\hat{a})+\nabla(\hat{b}))(t)
\]
\[
=\nabla(\hat{a})(t)+\nabla(\hat{b})(t)=\phi_t(\nabla(\hat{a}))+\phi_t(\nabla(\hat{b}))
\]
\[
=\phi_t\nabla \phi(a)+\phi_t\nabla \phi(b).
\]
Similarly,
\[
\phi_t\nabla \phi(\lambda a)=\lambda\phi_t\nabla \phi(a).
\]
Hence, $\phi_t\nabla \phi$ is linear.
For each $a\in \mathcal{A}$ there exists a derivation $D_{\hat{a}}$ on $\mathcal{J}$ such that
$$
\nabla(\hat{a})=D_{\hat{a}}(\hat{a}).
$$
Note that $\phi_tD_{\hat{a}}\phi$ is a derivation on $\mathcal{A}$. Indeed,
it is clear that $\phi_tD_{\hat{a}}\phi$ is linear on $\mathcal{A}$.
For any $b$, $c\in \mathcal{A}$, we have
\[
\phi_tD_{\hat{a}}\phi(bc)=\phi_tD_{\hat{a}}(\phi(b)\phi(c))=
\]
\[
=\phi_t(D_{\hat{a}}(\phi(b))\phi(c)+\phi(b)D_{\hat{a}}(\phi(c)))
\]
\[
=D_{\hat{a}}(\phi(b))(t)\phi(c)(t)+\phi(b)(t)D_{\hat{a}}(\phi(c))(t)
\]
\[
=\phi_tD_{\hat{a}}\phi(b)c+b\phi_tD_{\hat{a}}\phi(c),
\]
where $\phi(b)(t)=b$, $\phi(c)(t)=c$.
Hence, $\phi_tD_{\hat{a}}\phi$ is a derivation on $\mathcal{A}$.

$\phi_tD_{\hat{a}}\phi$ can be weak-continuously extended to a derivation on $B_{sa}(H)$.
Every derivation on $B_{sa}(H)$ is inner by \cite{U}. So, $\phi_tD_{\hat{a}}\phi$ is a spatial
derivation implemented by elements in $B_{sa}(H)$.

Therefore, $\phi_t\nabla \phi$ is a local derivation and is a spatial derivation on $\mathcal{A}$ by theorem 5.4 in \cite{AKP}.

Let $a_t$ be an element in $B_{sa}(H)$ such that
$$
\phi_t\nabla \phi(x)=a_tx-xa_t, x\in B_{sa}(H).
$$
Let $\bar{a}(t)=a_t$, $t\in \Omega$. Then
$$
\nabla ({\bf x})(t)=(\bar{a}{\bf x}-{\bf x}\bar{a})(t), {\bf x}\in M(\Omega,B_{sa}(H)),
$$
i.e.
$$
\nabla ({\bf x})=\bar{a}{\bf x}-{\bf x}\bar{a}, {\bf x}\in M(\Omega,B_{sa}(H))
$$
and $\bar{a}\in M(\Omega,B_{sa}(H))$. Hence $\nabla$ is a derivation. The proof is compete.
\end{proof}

Note that, if the topological space $\Omega$ in Theorem \ref{5.1} is a locally compact Hausdorff space,
then Theorem \ref{5.1} follows by Theorem 5.4 in \cite{AKP}.

The authors thank the anonymous referee for the valuable suggestions
and comments.

\end{document}